\documentclass[]{birkjour}
\usepackage{amsmath,amsthm,amsopn,amssymb,mathrsfs,times,newtxtext,newtxmath,upref}
\usepackage[mathscr]{eucal}

\usepackage{enumitem}
\setlist{label={$$\roman{enumi}\kern1pt$)$}}


\newtheorem{thm}{Theorem}[section]
\newtheorem{prop}[thm]{Proposition}
\newtheorem{cor}[thm]{Corollary}
\newtheorem*{cor*}{Corollary}
\newtheorem{lema}[thm]{Lemma}
\newtheorem*{lema*}{Lemma}

\numberwithin{equation}{section}
\theoremstyle{definition}
\newtheorem*{Def}{Definition}

\newtheorem{obs}{Remark}


%

\newcommand{\St}{\mathcal{S}}

\newcommand{\cQ}{{\bf Q}}

\newcommand{\M}{\mathcal{M}}

\newcommand{\N}{\mathcal{N}}

\newcommand{\X}{\mathcal{X}}
\newcommand{\Y}{\mathcal{Y}}
\newcommand{\Z}{\mathcal{Z}}
\newcommand{\W}{\mathcal{W}}

\newcommand{\mc}[1]{\mathcal{#1}}
\newcommand{\T}{\mathcal{T}}

\DeclareMathOperator{\Mp}{{\bf Mp}}
\DeclareMathOperator{\Id}{{\bf Id}}

\newcommand{\PMN}{P_{\M,\N}}

%


\DeclareMathOperator{\ran}{ran}
\DeclareMathOperator{\dom}{dom}

\DeclareMathOperator{\mul}{mul}

\newcommand{\pl}{{\mathbin{/\mkern-3mu/}}}

\newcommand{\lr}{{\bf L}}
\newcommand{\ls}{{\bf LS}}
\newcommand{\op}{{\bf O}}

\begin{document}


\title{Factorizations of linear relations by idempotents}

\author[Arias]{M.~Laura~Arias}

\address{
	Instituto Argentino de Matem\'atica ``Alberto P. Calder\'on'' \\
	CONICET\\
	Saavedra 15, Piso 3\\
	(1083) Buenos Aires, Argentina
}
\email{lauraarias@conicet.gov.ar}

\author[Contino]{Maximiliano Contino}

\address{Depto. de Matemáticas, CUNEF Universidad \\  C. de Leonardo Prieto Castro, 2 \\ (28040) Madrid, Spain \\ [5pt]
	Facultad de Ingenier\'{\i}a, Universidad de Buenos Aires\\
	Paseo Col\'on 850 \\
	(1063) Buenos Aires, Argentina}
\email{mcontino@fi.uba.ar}

\author[Marcantognini]{Stefania Marcantognini}

\address{
	Instituto Argentino de Matem\'atica ``Alberto P. Calder\'on'' \\
	CONICET\\
	Saavedra 15, Piso 3\\
	(1083) Buenos Aires, Argentina \\[5pt]
	Universidad Nacional de General Sarmiento -- Instituto de Ciencias
	\\ Juan Mar\'ia Gutierrez 
	\\ (1613) Los Polvorines, Pcia. de
	Buenos Aires, Argentina}
\email{smarcantognini@ungs.edu.ar}

\keywords{linear relations, idempotents, multivalued projections, factorizations}
\subjclass{47A06, 47A62, 47A68}

%

 \begin{abstract} 
We study the class of those linear relations that can be factorized as products of idempotent relations. We provide several characterizations of this class, extending known factorization results for operators to the more general setting of linear relations.
\end{abstract}

\maketitle

\section{Introduction}

The notion of a linear relation as a multivalued linear operator was introduced by R. Arens in \cite{Arens}, and has since been widely studied due to its broad range of applications. 

In this article, we focus our attention on factorizations of linear relations. A seminal work on the subject is \cite{PS}, where the authors characterize those pairs of linear relations $R$ and $S$ for which there exists a linear relation (or operator) $T$ such
that $R \subseteq ST,$ respectively $R \subseteq TS.$ The equalities $R=ST$ and $R=TS$ were studied later in \cite{Sando}. These results extend the well-known  Douglas' factorization theorem for  linear operators in Hilbert spaces \cite{Douglas} to the setting of linear relations.  Motivated by these articles, we investigate factorizations of the form 
\begin{equation}\label{eqintro}
R=SQ \ \text{ and} \ R=QS,
\end{equation}
 where $Q$ is an idempotent linear relation. 
 In operator theory, factorizations of the form (\ref{eqintro}) with $Q$ a projection (i.e., a single-valued idempotent relation) correspond to restrictions or extensions of operators. This perspective inspired us to explore similar factorizations in the context of linear relations.
 
 We begin by examining (\ref{eqintro}) with $R$ and $S$ linear relations and $Q$ a projection. To this end, the concept of linear selection is a useful tool for extending operator results to linear relations.  A linear selection of a linear relation  $R$ is a single-valued linear relation that is contained in $R$ and maintains the same domain as  $R$ \cite{LN}. 
 Using linear selections, we provide alternative proofs of Douglas-type results for linear relations, and characterize those $R$ and $S$ admitting factorizations of the form (\ref{eqintro}) with $Q$ a projection.
 
We then study factorizations (\ref{eqintro}) where $Q$ is a multivalued projection—an idempotent linear relation with invariant domain.
In many senses, multivalued projections serve as the linear relation analogue of projections.  This concept was introduced by Cross et al. in \cite{Cross}, and later studied in \cite{Labrousse}, \cite{ACMM2}, \cite{ACMM3}.  For a comprehensive survey on idempotent linear relations, see \cite{ACMM1}. 
We analyze  the domain, range, kernel and multivalued part of linear relations factored as in (\ref{eqintro}), where $Q$ is either an idempotent or a multivalued projection. Additionally, we investigate factorizations of the form $PQ$ with $P,Q$ multivalued projections, identifying various conditions under which such factorizations exist. 
These results extend known properties of products of orthogonal projections (or idempotent operators) in Hilbert spaces \cite{ACM}, \cite{Cor}, \cite{B}.

The paper is organized as follows. Section 2 introduces notation and background on linear relations, idempotents, and multivalued projections. Section 3 presents properties of idempotent relations that will be useful throughout the paper. Section 4 provides the definition of a linear selection and a result about the linear selections of idempotents and multivalued projections.
Section 5 contains the main results on factorizations (\ref{eqintro}) with $R, S$ linear relations and $Q$ a projection, see Theorems \ref{solQrl} and \ref{SolQ2}.
Finally, Section 6 addresses products of multivalued projections, with a complete description of this class given in Theorem \ref{Zorn} and its corollaries.

\section{Preliminaries}

Throughout the paper the symbols $\X, \Y$ and $\Z$ denote linear spaces over the real or complex field $\mathbb{K}$, and $\N, \St,\T$ linear subspaces of either of them.  A linear relation (multivalued operator) between $\X$ and $\Y$ is a linear subspace $T$ of the cartesian product $\X \times \Y$. The set of linear relations from $\X$ into $\Y$ is denoted by $\lr(\X,\Y),$ and $\lr(\X)$ when $\X=\Y.$ 

 Given $T\in \lr(\X,\Y),$ $\dom T,$  $\ran T$ and $\ker T$ stand for the domain,  range and kernel of $T,$ respectively. The multivalued part of $T$ is defined by $\mul T:=\{y\in \Y: (0,y)\in T\}.$ If $\mul T=\{0\},$ $T$ is (the graph of) an operator.  The set of linear operators from $\X$ into $\Y$ is denoted $\op(\X,\Y),$ and $\op(\X)$ when $\X=\Y.$ The inverse of $T$ is the relation $T^{-1}:=\{(y,x) : (x,y) \in T\}$. Thus,  $\dom T^{-1}=\ran T$ and $\mul T^{-1} =\ker T.$ 
 We denote
 $$T(\N) := \{y : (x, y) \in T \mbox{ for some } x \in \N \},$$
 and for any $x \in \dom T,$  $T(x) :=T(\{x\}).$

For $T, S \in \lr(\X,\Y),$ the sum of $T$ and $S$ as subspaces is written  $T \ \hat{+}  \ S$   and  $T\hat{\dotplus}S$ if $T\cap S=\{(0,0)\}$.

The sum $T+S$ is the linear relation defined by
$$T+S:=\{(x,y+z): (x,y ) \in T \mbox{ and } (x,z) \in S\}.$$
If $R \in \lr(\Y,\Z),$  the product $RT$ is the linear relation from $\X$ to $\Z $ defined by
$$RT:=\{(x,z): (x,y) \in T \mbox{ and } (y,z) \in R \mbox{ for some } y \in \Y\}.$$  
Given two subsets $A\subseteq \lr(\X,\Y) $ and $ B\subseteq \lr(\Y,\Z) $, we denote $BA:=\{RT: T\in A, R\in B\}.$

The following results will be used throughout the paper without mention.

\begin{lema}[{\cite[Lemma 2.3]{ACMM3}}] \label{sumasubs} Let $T, S \in \lr(\X, \Y),$ $R \in \lr(\Y,\Z)$ and $F\in \lr(\Z, \X)$. Then
	\begin{enumerate}
		\item[1.] $RT \ \hat{+} \ RS \subseteq R(T \ \hat{+} \ S) $ and the equality holds if $\ran T\subseteq \dom R$ or $\ran S\subseteq \dom R$.
		\item[2.] $TF \ \hat{+} \ SF \subseteq (T \ \hat{+} \ S)F $ and the equality holds if  $\dom T \subseteq \ran F$ or $\dom S\subseteq \ran F$.
	\end{enumerate}
\end{lema}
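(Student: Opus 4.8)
The plan is to establish item~1 by a direct element-chase in the product spaces, and then derive item~2 from item~1 by passing to inverse relations.

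For the inclusion $RT\mathbin{\hat{+}}RS\subseteq R(T\mathbin{\hat{+}}S)$, I would take $(x,z)$ in the left-hand side, decompose it as $x=x_1+x_2$, $z=z_1+z_2$ with $(x_1,z_1)\in RT$ and $(x_2,z_2)\in RS$, and choose $y_1,y_2$ with $(x_i,y_i)$ in $T$ resp.\ $S$ and $(y_i,z_i)\in R$. Then $(x_1+x_2,y_1+y_2)\in T\mathbin{\hat{+}}S$ and $(y_1+y_2,z_1+z_2)\in R$ because $R$ is a linear subspace, which gives $(x,z)\in R(T\mathbin{\hat{+}}S)$; no hypothesis is used here. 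For the reverse inclusion under the assumption $\ran T\subseteq\dom R$ (the case $\ran S\subseteq\dom R$ being symmetric), I would start from $(x,z)\in R(T\mathbin{\hat{+}}S)$, pick $y$ with $(x,y)\in T\mathbin{\hat{+}}S$ and $(y,z)\in R$, and split $x=x_1+x_2$, $y=y_1+y_2$ with $(x_1,y_1)\in T$, $(x_2,y_2)\in S$. Since $y_1\in\ran T\subseteq\dom R$, there is $z_1$ with $(y_1,z_1)\in R$, so $(x_1,z_1)\in RT$; subtracting, $(y_2,z-z_1)=(y,z)-(y_1,z_1)\in R$, hence $(x_2,z-z_1)\in RS$, and adding back yields $(x,z)\in RT\mathbin{\hat{+}}RS$.

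For item~2 I would avoid repeating the argument and instead use that inversion is an involution on linear relations with $(AB)^{-1}=B^{-1}A^{-1}$, $(A\mathbin{\hat{+}}B)^{-1}=A^{-1}\mathbin{\hat{+}}B^{-1}$, $\ran A^{-1}=\dom A$ and $\dom A^{-1}=\ran A$. Inverting both sides of the statement in item~2 turns it into exactly item~1 applied to $R:=F^{-1}$ and the relations $T^{-1}$, $S^{-1}$ (the hypotheses $\dom T\subseteq\ran F$, resp.\ $\dom S\subseteq\ran F$, becoming $\ran T^{-1}\subseteq\dom F^{-1}$, resp.\ $\ran S^{-1}\subseteq\dom F^{-1}$), so item~2 follows.

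The proof is essentially routine; the only point that needs attention is the reverse inclusion for the equality in item~1, where one must supply a second coordinate $z_1$ for $y_1$. This is precisely the place where the hypothesis $\ran T\subseteq\dom R$ enters: without it $y_1$ need not lie in $\dom R$ and the decomposition breaks down. Everything else follows from the linearity of $T$, $S$, $R$ and $F$ as subspaces of the relevant product spaces.
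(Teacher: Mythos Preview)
Your argument is correct. The paper does not supply its own proof of this lemma: it is simply quoted as \cite[Lemma~2.3]{ACMM3} among the preliminaries, so there is no in-paper proof to compare against. Your direct element-chase for item~1 and the reduction of item~2 to item~1 via inversion (using $(AB)^{-1}=B^{-1}A^{-1}$, $(A\mathbin{\hat{+}}B)^{-1}=A^{-1}\mathbin{\hat{+}}B^{-1}$, $\dom A^{-1}=\ran A$) constitute the standard route for this kind of statement and are exactly what one would expect to find in the cited source.
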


\begin{lema}(\cite[2.02]{Arens}, \cite[Proposition 1.21]{Labrousse})\label{lemalr} Let $S, T \in \lr(\X,\Y).$ Then $S=T$ if and only if $S\subseteq T,$ $\dom T\subseteq\dom S$ and $\mul T \subseteq \mul S.$
\end{lema}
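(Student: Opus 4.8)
The statement is an ``if and only if'', and the plan is to dispatch the forward implication trivially and to prove the reverse one by exploiting that a linear relation is, by definition, a linear subspace of $\X\times\Y$, so that ordered pairs may be added and subtracted freely.

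For the forward direction, if $S=T$ then in particular $S\subseteq T$, while $\dom S=\dom T$ and $\mul S=\mul T$, so all three conditions hold (indeed with equality in the last two). For the reverse direction, assume $S\subseteq T$, $\dom T\subseteq\dom S$ and $\mul T\subseteq\mul S$. Since $S\subseteq T$ is part of the hypothesis, it suffices to prove $T\subseteq S$. I would take an arbitrary $(x,y)\in T$. From $x\in\dom T\subseteq\dom S$ there is some $y'\in\Y$ with $(x,y')\in S$, and hence $(x,y')\in T$ as well. Subtracting the two elements of $T$ yields $(0,y-y')=(x,y)-(x,y')\in T$, that is, $y-y'\in\mul T$. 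The hypothesis $\mul T\subseteq\mul S$ then gives $(0,y-y')\in S$, and adding this to $(x,y')\in S$ produces $(x,y')+(0,y-y')=(x,y)\in S$. Therefore $T\subseteq S$, and combined with $S\subseteq T$ we conclude $S=T$.

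I do not anticipate any real obstacle: the only point worth stressing is purely definitional, namely that linear relations are linear subspaces, which is what legitimizes the ``subtract a witness over the same point $x$, correct the second coordinate by an element of the multivalued part, add back'' maneuver. No separate treatment of the single-valued case or of topological closures is required, since the whole argument lives at the level of linear algebra.
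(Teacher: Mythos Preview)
Your argument is correct and is the standard proof of this fact. The paper does not supply its own proof of this lemma; it merely records the statement and cites \cite[2.02]{Arens} and \cite[Proposition 1.21]{Labrousse}, so there is nothing to compare against beyond noting that your ``subtract a witness, correct by an element of the multivalued part, add back'' maneuver is exactly the intended one.
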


\begin{lema}  The following statements hold:
\begin{enumerate}
	\item [1.] If $\St \dotplus \T=\N \dotplus \T$ and $\St \subseteq \N$ then $\St=\N.$
	\item[2.] Let $T \in \lr(\X,\Y)$. Then	$(\{0\}\times \N)T=\ker T \times \N \mbox{ and }T(\{0\}\times \St)=\{0\} \times T(\St).$ Also, if $\N \subseteq \ker T$ then $T \hat{+} (\N \times  \St )= T\hat{+} (\{0\} \times \St).$
\end{enumerate}

\end{lema}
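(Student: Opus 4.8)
The three statements are elementary facts that will be used as ``black boxes'' later, so the plan is to verify each by a direct chase through the definitions of the relevant operations on subspaces and linear relations, appealing to Lemma~\ref{lemalr} where an equality of relations is reduced to an inclusion together with containment of domains and multivalued parts.

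For item~1, I would argue purely with subspaces. Suppose $\St\dotplus\T=\N\dotplus\T$ and $\St\subseteq\N$. Take $n\in\N$; then $n\in\N\dotplus\T=\St\dotplus\T$, so $n=s+t$ with $s\in\St$, $t\in\T$. Since $\St\subseteq\N$ we get $t=n-s\in\N$, hence $t\in\N\cap\T=\{0\}$ by the directness of $\N\dotplus\T$, so $n=s\in\St$. This gives $\N\subseteq\St$, and with the hypothesis $\St\subseteq\N$ we conclude $\St=\N$. The only point to be careful about is which of the two sums must be direct; directness of $\N\dotplus\T$ is what is actually used, and it is part of the notation $\N\dotplus\T$.

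For item~2, I would unwind the product and sum formulas from Section~2. For $(\{0\}\times\N)T$: a pair $(x,z)$ lies in it iff there is $y$ with $(x,y)\in T$ and $(y,z)\in\{0\}\times\N$; the latter forces $y=0$ and $z\in\N$, i.e.\ $x\in\ker T$ and $z\in\N$ arbitrary, which is exactly $\ker T\times\N$. The identity $T(\{0\}\times\St)=\{0\}\times T(\St)$ is the symmetric computation: $(x,z)\in T(\{0\}\times\St)$ iff there is $y$ with $(x,y)\in\{0\}\times\St$ and $(y,z)\in T$, forcing $x=0$ and $z\in T(\St)$ (recalling $T(\St)=\{z:(y,z)\in T\text{ for some }y\in\St\}$). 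For the last identity, one inclusion is trivial since $\{0\}\times\St\subseteq\N\times\St$ gives $T\hat{+}(\{0\}\times\St)\subseteq T\hat{+}(\N\times\St)$. For the reverse, take $(x,y)\in T\hat{+}(\N\times\St)$, so $(x,y)=(x_1,y_1)+(n,s)$ with $(x_1,y_1)\in T$, $n\in\N$, $s\in\St$; since $\N\subseteq\ker T$ we have $(n,0)\in T$, so $(x_1+n,\,y_1)=(x_1,y_1)+(n,0)\in T$, and then $(x,y)=(x_1+n,y_1)+(0,s)\in T\hat{+}(\{0\}\times\St)$. (Strictly one should also check the two sums really are the ``hat'' sums, i.e.\ that the relevant intersections are trivial, but since the statement is only an equality of subspaces this is automatic from the definition of $\hat{+}$ as the subspace sum; the hat decoration merely signals that we think of it as such.)

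I do not anticipate a genuine obstacle here: each part is a one- or two-line verification once the definitions are written out. The mildest subtlety is bookkeeping in item~2 — remembering that $\N\subseteq\ker T$ is precisely what lets one absorb the $\N$-component into the graph of $T$ without changing the second coordinate — and making sure in item~1 that the directness hypothesis is applied to the correct summand.
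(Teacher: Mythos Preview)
Your argument is correct in every detail. Note, however, that the paper states this lemma without proof, treating the three assertions as elementary background facts; there is therefore no proof in the paper to compare against, and your write-up supplies exactly the routine verifications one would expect. One small remark: although your plan announces Lemma~\ref{lemalr} as a tool for reducing equalities of relations to inclusions plus domain/multivalued-part containments, you never actually invoke it---each identity in item~2 is handled by a direct double-inclusion or elementwise characterization, which is cleaner here anyway.
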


\subsection*{Idempotent linear relations}

Idempotent linear relations were studied in \cite{ACMM1}, and multivalued projections were introduced by Cross and Wilcox in \cite{Cross} and later studied by Labrousse in \cite{Labrousse}.

\begin{Def} Let $T \in \lr(\X)$.  We say that $T$ is an \emph{idempotent} if $T^2=T.$ If, in addition,  $\ran T \subseteq \dom T$ we say that $T$ is a \emph{multivalued projection}. 
\end{Def}

\begin{prop}[\cite{Cross}, \cite{Labrousse}]
	Given $\M,\N$ subspaces of $\X$,
	$$\PMN:=\{(m+n,m):m\in\M, n\in \N\},$$
	is the unique multivalued projection with range $\M$ and kernel $\N.$ 
\end{prop}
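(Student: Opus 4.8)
The plan is to separate the assertion into two parts: that $\PMN$ really is a multivalued projection with range $\M$ and kernel $\N$, and that it is the only one; the second part will be deduced from Lemma~\ref{lemalr}.

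For the first part I would begin by rewriting the definition in the equivalent form $\PMN = \{(x,y) : y\in\M,\ x-y\in\N\}$, from which one reads off at once $\ran\PMN=\M$, $\ker\PMN=\N$, $\dom\PMN=\M+\N$ and $\mul\PMN=\M\cap\N$. Then $\PMN^2=\PMN$ follows from a routine double inclusion: $\PMN\subseteq\PMN^2$ because $0\in\N$ allows $(y,y)$ to serve as an intermediate pair for each $(x,y)\in\PMN$, and $\PMN^2\subseteq\PMN$ because $\N$ is closed under addition. Since $\ran\PMN=\M\subseteq\M+\N=\dom\PMN$, this exhibits $\PMN$ as a multivalued projection with the prescribed range and kernel.

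The key ingredient for uniqueness is the claim that any multivalued projection $T$ acts as the identity on its range, i.e. $y\in\ran T$ implies $(y,y)\in T$. I would prove this by choosing $x$ with $(x,y)\in T$, using $\ran T\subseteq\dom T$ to find $z$ with $(y,z)\in T$, noting $(x,z)\in T^2=T$, so that $(0,y-z)=(x,y)-(x,z)\in T$ and hence $(y,y)=(y,z)+(0,y-z)\in T$. I expect this to be the crux of the argument, since it is precisely where both requirements $T^2=T$ and $\ran T\subseteq\dom T$ are used.

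With this in hand, uniqueness is a verification of the three conditions in Lemma~\ref{lemalr} for a multivalued projection $T$ with $\ran T=\M$, $\ker T=\N$. First, $T\subseteq\PMN$: for $(x,y)\in T$ we have $y\in\ran T=\M$, and since $(y,y)\in T$ by the claim, $(x-y,0)=(x,y)-(y,y)\in T$, so $x-y\in\ker T=\N$. Second, $\dom\PMN=\M+\N=\ran T+\ker T\subseteq\dom T$. Third, $\mul\PMN=\M\cap\N\subseteq\mul T$: for $y\in\M\cap\N$ we have $(y,y)\in T$ by the claim and $(y,0)\in T$ since $y\in\ker T$, whence $(0,y)=(y,y)-(y,0)\in T$, i.e. $y\in\mul T$. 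Lemma~\ref{lemalr} then yields $T=\PMN$.
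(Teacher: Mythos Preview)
Your argument is correct in every detail. The rewriting $\PMN=\{(x,y):y\in\M,\ x-y\in\N\}$ makes the computation of domain, range, kernel and multivalued part immediate, the two inclusions for idempotence are clean, and the key claim that $(y,y)\in T$ for every $y\in\ran T$ is exactly the right fulcrum for uniqueness; your application of Lemma~\ref{lemalr} to close the argument is tidy and uses the paper's own toolkit.

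Note that the paper does not supply a proof of this proposition at all: it simply quotes the result from \cite{Cross} and \cite{Labrousse}, so there is no in-paper argument to compare against. Your proof is thus a genuine addition rather than a paraphrase, and it has the virtue of being self-contained within the paper's preliminaries (only linearity of $T$ and Lemma~\ref{lemalr} are used).
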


We denote by $\Mp(\X)$ (resp. $\Id(\X)$) the set of multivalued projections (resp. idempotent relations) on $\X$. When there is no confusion as to the underlying space, we simply write $\Mp$ (and $\Id$) instead.

From now on, $\PMN$ is the multivalued projection with range $\M$ and kernel $\N.$ It is easy to check that $\dom \PMN=\M+\N$ and $\mul \PMN=\M\cap \N.$ When $\PMN$ is a projection, i.e., $\mul \PMN=\M\cap \N=\{0\}$, we write $P_{\M \pl \N}.$ We denote by $\cQ$ the set of projections.

\begin{prop}\label{repidemp}\cite[Corollaries 4.3, 4.4]{ACMM1} If $Q\in \Id(\X) $ then 
	$$Q=P_{\ran Q\cap\dom Q, \ker Q}\hat{+} (\{0\}\times\mul Q)$$
	
\end{prop}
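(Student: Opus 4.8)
The plan is to prove the two conditions of Lemma~\ref{lemalr} applied to the pair $Q$ and $P:=P_{\ran Q\cap\dom Q,\,\ker Q}\,\hat{+}\,(\{0\}\times\mul Q)$, together with the matching of domains and multivalued parts. First I would record the basic data of the summands: by the remarks following the definition of $\PMN$, the relation $P_{\ran Q\cap\dom Q,\,\ker Q}$ has domain $(\ran Q\cap\dom Q)+\ker Q$, range $\ran Q\cap\dom Q$, kernel $\ker Q$, and multivalued part $(\ran Q\cap\dom Q)\cap\ker Q$; adjoining $\{0\}\times\mul Q$ leaves the domain and kernel unchanged while enlarging the multivalued part to contain $\mul Q$. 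So $\dom P=(\ran Q\cap\dom Q)+\ker Q$ and $\mul P=\big((\ran Q\cap\dom Q)\cap\ker Q\big)+\mul Q$.

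Next I would establish the inclusion $P\subseteq Q$. A generic element of $P$ has the form $(u+n,\;u+y)$ with $u\in\ran Q\cap\dom Q$, $n\in\ker Q$, $y\in\mul Q$. Since $u\in\ran Q\cap\dom Q$, there is $x$ with $(x,u)\in Q$ and, because $u\in\dom Q$, also some pair $(u,v)\in Q$; as $Q^2=Q$ this gives $(x,v)\in Q$, and one checks that $v$ can be taken equal to $u$ — more precisely, $(u,u)\in Q$ for every $u\in\ran Q\cap\dom Q$, which is exactly the statement that $P_{\ran Q\cap\dom Q,\ker Q}\subseteq Q$ restricted to that subspace; I would prove this small fact first (it is essentially \cite[Corollary 4.3]{ACMM1}, but I would reprove it from $Q^2=Q$). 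Then $(u+n,u)=(u,u)+(n,0)\in Q$ since $(n,0)\in Q$, and finally $(u+n,u+y)=(u+n,u)+(0,y)\in Q$ because $(0,y)\in Q$. Hence $P\subseteq Q$.

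It then remains to verify $\dom Q\subseteq\dom P$ and $\mul Q\subseteq\mul P$; the second is immediate from the computation of $\mul P$ above. For the domain inclusion, take $x\in\dom Q$, say $(x,z)\in Q$. Applying $Q^2=Q$ once more, $z\in\dom Q$ and $(z,z')\in Q$ with $(x,z')\in Q$; the key algebraic point — which I expect to be the main obstacle — is to show that $x$ decomposes as $x=u+n$ with $u\in\ran Q\cap\dom Q$ and $n\in\ker Q$. The natural candidate is $u=z'$ (or a suitable element of $Q(x)$ lying in $\ran Q\cap\dom Q$) and $n=x-u$, and one must check $n\in\ker Q$, i.e. $(n,0)\in Q$; this follows from $(x,u)\in Q$, $(u,u)\in Q$ and linearity, giving $(x-u,0)=(x,u)-(u,u)\in Q$. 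Care is needed because $Q(x)$ may be a coset rather than a point, so the choice of $u$ within $Q(x)\cap(\ran Q\cap\dom Q)$ must be justified — this is where the idempotency is used most delicately. Once this decomposition is in hand, $x=u+n\in(\ran Q\cap\dom Q)+\ker Q=\dom P$, and Lemma~\ref{lemalr} yields $Q=P$, completing the proof.
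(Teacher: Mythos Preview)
The paper does not prove this proposition at all: it is stated with a citation to \cite[Corollaries 4.3, 4.4]{ACMM1}, so there is no in-paper argument to compare against. Your self-contained proof via Lemma~\ref{lemalr} (show $P\subseteq Q$, then match domains and multivalued parts) is the natural route and is essentially correct.

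There is one genuine slip in the domain-inclusion step. From $(x,z)\in Q$ you write ``Applying $Q^2=Q$ once more, $z\in\dom Q$''. That inference is false in general: $z\in\ran Q$, but $z\in\dom Q$ would require $\ran Q\subseteq\dom Q$, i.e.\ $Q\in\Mp$, which is not assumed. The construction of $z'$ built on this premise therefore does not go through as written. The fix is the one you half-anticipate in your parenthetical remark: use the \emph{other} reading of $Q=Q^2$. Since $(x,z)\in Q=Q^2$, there exists $w$ with $(x,w)\in Q$ and $(w,z)\in Q$; this $w$ lies in $\ran Q\cap\dom Q$ automatically. By your earlier step $(w,w)\in Q$, so $(x-w,0)=(x,w)-(w,w)\in Q$ and $x=w+(x-w)\in(\ran Q\cap\dom Q)+\ker Q=\dom P$. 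With this correction the argument is complete, and no delicate ``choice within a coset'' is needed: the intermediate $w$ handed to you by the factorization $Q=Q^2$ is already the element you want.
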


Throughout the article, we will use that if $Q\in \Id(\X) $ then $Q^{-1}\in \Id(\X)$, see \cite[Propositions 2.2 and 2.4]{Labrousse}.

\section{Some properties of idempotent linear relations.}

In this section, we collect some properties of products of idempotent relations that will be useful in the sequel. We begin by establishing fundamental properties of the domains and kernels of the two factor products when one of them is an idempotent.

\begin{prop} \label{productproj} Let $T \in\lr(\X)$ and $ Q \in \Id(\X).$ If  $\dom T\cap \ran Q \subseteq \dom Q\cap\ran Q$ then 
\begin{align*}
\dom TQ&= \ker Q +\ran Q \cap \dom T \mbox{ and }
 \ker TQ=\ker Q + \ran Q \cap \ker T 
 \end{align*}
\end{prop}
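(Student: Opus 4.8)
The plan is to compute $\dom TQ$ and $\ker TQ$ directly from the definition of the product, using the explicit description of an idempotent $Q$ given in Proposition \ref{repidemp}, namely $Q=P_{\ran Q\cap\dom Q,\,\ker Q}\hat{+}(\{0\}\times\mul Q)$. Recall that for the multivalued projection $P:=P_{\M,\N}$ with $\M=\ran Q\cap\dom Q$ and $\N=\ker Q$ one has $P=\{(m+n,m):m\in\M,n\in\N\}$, so $\dom P=\M+\N$ and $\ker P=\N$.

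For the domain: by definition $x\in\dom TQ$ iff there is $y$ with $(x,y)\in Q$ and $y\in\dom T$. Writing $(x,y)\in Q$ via Proposition \ref{repidemp}, we get $x=m+n$ with $m\in\M$, $n\in\N$, and $y\in m+\mul Q$. Thus $\dom TQ$ consists of those $x=m+n$ for which $(m+\mul Q)\cap\dom T\neq\emptyset$, i.e. $m\in\dom T+\mul Q$ (note $\mul Q\subseteq\M\subseteq\dom Q\cap\ran Q$, and $\mul Q=\M\cap\N$). The hypothesis $\dom T\cap\ran Q\subseteq\dom Q\cap\ran Q=\M$ is what lets us simplify: it forces the relevant part of $\dom T$ meeting $\ran Q$ to sit inside $\M$, so that $\ran Q\cap\dom T=\M\cap\dom T$ and the condition on $m$ collapses to $m\in\M\cap\dom T$ (up to adding $\mul Q$, which is absorbed since $\mul Q\subseteq\M$). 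This yields $\dom TQ=\N+(\M\cap\dom T)=\ker Q+\ran Q\cap\dom T$; here I will also use that $\M\cap\dom T=\ran Q\cap\dom T$ under the hypothesis, and that adding $\mul Q\subseteq\ran Q\cap\dom T$ changes nothing.

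For the kernel: $x\in\ker TQ$ iff there is $y$ with $(x,y)\in Q$ and $(y,0)\in T$, i.e. $y\in\ker T$. Arguing as above, $x=m+n$, $m\in\M$, $n\in\N$, and we need $(m+\mul Q)\cap\ker T\neq\emptyset$. Since $\mul Q\subseteq\ran Q$ and $\mul Q\subseteq\dom Q$, and using the hypothesis to control where $\ker T$ meets $\ran Q$, this reduces to $m\in\ran Q\cap\ker T$ (again absorbing $\mul Q$, which lies in $\ran Q$; one should check $\mul Q\subseteq\ker T$ is not needed — rather $m+\mul Q$ meeting $\ker T$ with $m\in\M$ forces, under the hypothesis, $m\in\ker T$). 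Hence $\ker TQ=\N+\ran Q\cap\ker T=\ker Q+\ran Q\cap\ker T$.

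The main obstacle is the careful bookkeeping of the multivalued part $\mul Q$: one must verify that $\mul Q$ can always be absorbed into both $\ran Q\cap\dom T$ and $\ran Q\cap\ker T$, and that the decomposition $x=m+n$ is flexible enough (it is not unique, since $\M\cap\N=\mul Q$ may be nonzero) that the conditions "$m\in\dom T+\mul Q$" and "$m\in\M\cap\dom T+\mul Q$" genuinely coincide under the hypothesis $\dom T\cap\ran Q\subseteq\dom Q\cap\ran Q$. I expect to handle this by proving the two inclusions separately for each set, using Lemma \ref{sumasubs} to manipulate the sums $\hat{+}$ and the identities in part 2 of the last preliminary lemma ($T(\{0\}\times\St)=\{0\}\times T(\St)$, etc.) to keep the computation clean rather than chasing elements throughout.
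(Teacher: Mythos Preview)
Your approach is essentially the paper's: both arguments rest on Proposition~\ref{repidemp} and an element chase. The paper, however, never decomposes $x$ as $m+n$ in advance. Instead, given $(x,z)\in Q$ with $z\in\dom T\cap\ran Q\subseteq\dom Q\cap\ran Q$, it simply observes that $(z,z)\in Q$ (from the $P_{\M,\N}$ part of the decomposition), so $x-z\in\ker Q$, and the inclusion $\dom TQ\subseteq\ker Q+\ran Q\cap\dom T$ drops out immediately. The reverse inclusion is handled similarly. This sidesteps the $\mul Q$ bookkeeping entirely.

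There is a genuine slip in your write-up. You assert that ``adding $\mul Q\subseteq\ran Q\cap\dom T$ changes nothing'' and, for the kernel, that ``$m+\mul Q$ meeting $\ker T$ \dots\ forces $m\in\ker T$''. Neither inclusion $\mul Q\subseteq\dom T$ nor $\mul Q\subseteq\ker T$ holds in general, and the hypothesis $\dom T\cap\ran Q\subseteq\dom Q\cap\ran Q$ does not supply them. What actually rescues the argument is the inclusion you mention but do not use: $\mul Q=\M\cap\N\subseteq\N=\ker Q$. Concretely, if $m+\mu\in\dom T$ with $\mu\in\mul Q$, then $m+\mu\in\ran Q\cap\dom T$ (this uses only $\mu\in\ran Q$, not $\mu\in\dom T$), and now $x=(m+\mu)+(n-\mu)$ with $n-\mu\in\N$; so $x\in\ker Q+\ran Q\cap\dom T$. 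The kernel case is identical with $\ker T$ in place of $\dom T$. In other words, $\mul Q$ is absorbed by $\ker Q$, not by $\dom T$ or $\ker T$. Once you make that correction your proof goes through, and it is then just a more verbose version of the paper's.
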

\begin{proof} We prove  the first equality only since the other one can be proved in a similar fashion. 
	
	Let $x \in \dom TQ$ then $(x,z) \in Q$ and $(z,y) \in T$ for some $z, y \in \X.$ By Proposition \ref{repidemp}, we have that $x-z \in \ker Q$, since $z \in \dom T \cap \ran Q \subseteq \dom Q \cap \ran Q$. Hence $x=x-z+z \in \ker Q + \ran Q \cap \dom T.$ Conversely, let $x=x_1+x_2$ with $x_1 \in \ker Q$ and $x_2 \in \ran Q \cap \dom T.$  Then $(x_1,0) \in Q,$ and $(x_2,z) \in T$ and $(z',x_2)\in Q$ for some $z, z' \in \X.$ Since $x_2 \in \dom T \cap \ran Q\subseteq \dom Q \cap \ran Q,$ it holds that $(x_2-z',0) \in Q$. Therefore $(x,x_2)=(z',x_2)+(x_2-z',0)+(x_1,0) \in Q.$ So $(x,z) \in TQ$ and $x \in \dom TQ.$ 
\end{proof}

\begin{obs}
If $Q \in \Mp(\mc X)$, then $\ran Q \subseteq \dom Q$, and therefore the results of Proposition \ref{productproj} hold for any $T \in \lr(\mc X)$.
\end{obs}

By duality, we obtain the corresponding results for the range and multivalued part of the products of the form $QT$ with $Q$ an idempotent. 

\begin{cor}  Let $T\in\lr(\X)$ and $ Q \in \Id(\X).$ If $\ran T\cap \dom Q \subseteq \dom Q\cap \ran Q$ then 
\begin{align*}\ran QT&= \mul Q + \dom Q \cap \ran T \mbox{ and }\\
\mul QT&=\mul Q + \dom Q \cap \mul T.
 \end{align*}
\end{cor}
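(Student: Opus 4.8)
The plan is to obtain this corollary from Proposition \ref{productproj} by the standard duality device for linear relations, namely passing to inverses. The key observation is that for any $R \in \lr(\X)$ one has $\dom R^{-1} = \ran R$, $\ker R^{-1} = \mul R$, and $(AB)^{-1} = B^{-1}A^{-1}$; moreover, as recalled in the excerpt, $Q \in \Id(\X)$ implies $Q^{-1} \in \Id(\X)$. So I would set $Q' := Q^{-1}$ and $T' := T^{-1}$, both in $\lr(\X)$, with $Q' \in \Id(\X)$.

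First I would translate the hypothesis: $\ran T \cap \dom Q \subseteq \dom Q \cap \ran Q$ becomes, under the dictionary above, $\dom T' \cap \ran Q' \subseteq \ran Q' \cap \dom Q'$, which is exactly the hypothesis of Proposition \ref{productproj} applied to $T'$ and $Q'$. Second, I would apply that proposition to get
\begin{align*}
\dom T'Q' &= \ker Q' + \ran Q' \cap \dom T', \\
\ker T'Q' &= \ker Q' + \ran Q' \cap \ker T'.
\end{align*}
Third, I would rewrite both sides using $T'Q' = T^{-1}Q^{-1} = (QT)^{-1}$, so that $\dom T'Q' = \ran QT$ and $\ker T'Q' = \mul QT$; and on the right-hand sides, $\ker Q' = \mul Q$, $\ran Q' = \dom Q$, $\dom T' = \ran T$, and $\ker T' = \mul T$. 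Substituting yields precisely
\begin{align*}
\ran QT &= \mul Q + \dom Q \cap \ran T, \\
\mul QT &= \mul Q + \dom Q \cap \mul T,
\end{align*}
as claimed.

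There is essentially no obstacle here beyond bookkeeping; the only point requiring a moment of care is confirming that $(QT)^{-1} = T^{-1}Q^{-1}$ holds for linear relations (it does, directly from the definition of the product and the inverse) and that the set-theoretic identities among domain, range, kernel and multivalued part under inversion are applied on the correct side of each product. I would state the proof briefly, noting "By duality, applying Proposition \ref{productproj} to $T^{-1}$ and $Q^{-1}$ and using $(QT)^{-1} = T^{-1}Q^{-1}$ together with $\dom R^{-1} = \ran R$, $\ker R^{-1} = \mul R$," and then display the substitution. No separate argument for the second equality is needed since it is the image of the second equality in Proposition \ref{productproj} under the same translation.
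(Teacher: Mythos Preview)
Your proof is correct and follows exactly the paper's own approach: the paper's proof reads simply ``Take inverses and use Proposition~\ref{productproj},'' and your proposal is a careful unpacking of precisely that duality argument.
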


\begin{proof} Take inverses and use Proposition \ref{productproj}.
	\end{proof}

\begin{prop} \label{idemp} Let $P,Q \in \Id(\X).$ Then
$QP=Q$ if and only if $\ker P \subseteq \ker Q$ and $\dom Q \subseteq \dom P.$

\end{prop}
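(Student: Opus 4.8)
The plan is to prove both implications directly from the definitions, using Lemma~\ref{lemalr} to reduce the equality $QP=Q$ to the three conditions $QP\subseteq Q$, $\dom Q\subseteq\dom QP$, and $\mul Q\subseteq\mul QP$. For the forward direction I would extract the kernel and domain inclusions from $QP=Q$ directly; for the converse I would verify those three conditions under the hypotheses $\ker P\subseteq\ker Q$ and $\dom Q\subseteq\dom P$.

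\begin{proof}
Assume first that $QP=Q.$ If $x\in\ker P$ then $(x,0)\in P,$ and since $(0,0)\in Q$ we get $(x,0)\in QP=Q,$ so $x\in\ker Q;$ hence $\ker P\subseteq\ker Q.$ Next, $\dom Q=\dom QP\subseteq\dom P,$ since $(x,z)\in QP$ forces $(x,y)\in P$ for some $y,$ whence $x\in\dom P.$

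Conversely, assume $\ker P\subseteq\ker Q$ and $\dom Q\subseteq\dom P.$ We check the three conditions of Lemma~\ref{lemalr}. For $QP\subseteq Q:$ let $(x,z)\in QP,$ so there is $y$ with $(x,y)\in P$ and $(y,z)\in Q.$ Since $P^2=P$ and $(x,y)\in P,$ we have $(y,y)\in P$ (as $(x,y)\in P$ and $(x,y)\in P$ give $(x,y)\in P^2$ is not quite it; rather, from $(x,y)\in P=P^2$ there is $u$ with $(x,u)\in P$ and $(u,y)\in P,$ and then $(x,y-u)\in P$ with... ) — more directly, from $(x,y)\in P=P^2,$ pick $u$ with $(x,u),(u,y)\in P;$ then $(x,y)-(x,u)=(0,y-u)\in P,$ so $y-u\in\mul P\subseteq\ker P\subseteq\ker Q$ (using $\mul P\subseteq\ker P$ for idempotents) hmm this needs $\mul P\subseteq\ker P$. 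Let me instead argue: $(u,y)\in P$ and $(x,u)\in P$ with $(x,y)\in P,$ so $(u,y)-(u,?)$... Cleanest: since $(x,y)\in P$ and $x\in\dom P,$ and $P$ is idempotent, $y\in\ran P\subseteq\ker P\hat{+}(\dom P\cap\ran P)$ is overkill. I will instead use that $(0,y-u)\in P$ implies $y-u\in\mul P,$ and for idempotents $\mul P\subseteq\ker P$ (since $(0,w)\in P=P^2$ gives $v$ with $(0,v),(v,w)\in P,$ and $(0,v)\in P$ means $v\in\mul P,$ so... this recurses).

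Given these subtleties, the step I expect to be the main obstacle is exactly showing $QP\subseteq Q;$ the kernel and domain conditions are the easy half. The robust way around it is to invoke Proposition~\ref{repidemp} to write $P=P_{\ran P\cap\dom P,\ker P}\hat{+}(\{0\}\times\mul P)$ and $Q=P_{\ran Q\cap\dom Q,\ker Q}\hat{+}(\{0\}\times\mul Q),$ together with the fact (used freely in the paper, from \cite[Propositions~2.2, 2.4]{Labrousse}) that $\mul P\subseteq\ker P$ and $\mul Q\subseteq\ker Q$ for idempotents. Then $QP\subseteq Q$ follows because composing $Q$ with $P$ on the right, under $\ker P\subseteq\ker Q,$ does not enlarge the graph: any $(x,z)\in QP$ has $(x,y)\in P,$ $(y,z)\in Q,$ and writing $x=x_0+x_1$ with $x_0\in\ran P\cap\dom P$ and $x_1\in\ker P\subseteq\ker Q$ shows $(x,z)\in Q.$ For $\dom Q\subseteq\dom QP:$ if $x\in\dom Q$ then $x\in\dom P,$ so $(x,y)\in P$ for some $y;$ one then checks $y\in\dom Q$ (using $\ker P\subseteq\ker Q$ to replace $y$ by a representative in $\dom Q$) and concludes $x\in\dom QP.$ Finally $\mul Q\subseteq\mul QP$ since $(0,0)\in P$ gives $\mul Q=Q(\{0\})\subseteq QP(\{0\})=\mul QP.$ Applying Lemma~\ref{lemalr} yields $QP=Q.$
\end{proof}
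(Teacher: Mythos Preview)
Your forward direction is correct and matches the paper's (which simply calls it ``trivial''). The backward direction, however, has a genuine gap in the step $QP\subseteq Q$, and the write-up never recovers from it.

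The false claim is that $\mul P\subseteq\ker P$ for every idempotent $P$. This is not true: take $P=\{0\}\times\X$, which satisfies $P^2=P$ with $\mul P=\X$ and $\ker P=\{0\}$. (What the paper actually cites from \cite[Propositions~2.2, 2.4]{Labrousse} is that $P^{-1}\in\Id(\X)$ whenever $P\in\Id(\X)$, not any inclusion between $\mul P$ and $\ker P$.) Your route---decompose $x=x_0+x_1$ with $x_0\in\ran P\cap\dom P$ and $x_1\in\ker P$---then stalls: to pass from $(y,z)\in Q$ to $(x,z)\in Q$ you would need $x_0-y\in\ker Q$, but all you can extract from $(x_0,y),(x_0,x_0)\in P$ is $x_0-y\in\mul P$, and that is not enough.

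The idea you are missing, and the one the paper uses, is to look at the \emph{middle} element rather than at $x$. From $(y,z)\in Q$ you get $y\in\dom Q$, and the hypothesis $\dom Q\subseteq\dom P$ gives $y\in\ran P\cap\dom P$. Proposition~\ref{repidemp} then yields $(y,y)\in P$, hence $(x-y,0)=(x,y)-(y,y)\in P$, so $x-y\in\ker P\subseteq\ker Q$ and $(x,z)=(y,z)+(x-y,0)\in Q$. No decomposition of $x$ and no claim about $\mul P$ are needed. The paper proves the reverse inclusion $Q\subseteq QP$ directly (rather than via Lemma~\ref{lemalr}) with the same decomposition you attempted for the other inclusion; your sketch of $\dom Q\subseteq\dom QP$ is along the right lines but, as written (``replace $y$ by a representative in $\dom Q$''), it is not a proof. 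Finally, the text contains several abandoned attempts and self-corrections; those should be removed before this is presented as a proof.
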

\begin{proof} Assume that $\ker P \subseteq \ker Q$ and $\dom Q \subseteq \dom P$. 
Let $(x,y) \in QP.$ Then there exists $z \in \X$ such that $(x,z) \in P$ and $(z,y) \in Q.$ Hence $z \in \ran P \cap \dom Q\subseteq \ran P \cap \dom P$ so that, by Proposition \ref{repidemp}, $(x-z,0) \in P.$ Then $x-z \in \ker P \subseteq \ker Q,$ and $(x,y)=(z,y)+(x-z,0) \in Q.$ This shows that $QP\subseteq Q. $ As for the reverse inclusion, let $(x,y) \in Q.$ Then $x \in \dom Q \subseteq \dom P$ so that $x=x_1+x_2$ with $x_1 \in \ran P \cap \dom P$ and $x_2 \in \ker P \subseteq \ker Q.$ Then, on one hand $(x_2,0) \in QP.$ On the other hand, applying again  Proposition \ref{repidemp}, $(x_1,x_1) \in P$ and $(x_1,y)=(x,y)-(x_2,0) \in Q$ so that $(x_1,y) \in QP.$ Hence $(x,y)=(x_1,y)+(x_2,0) \in QP.$ The converse is trivial.
\end{proof}

\begin{cor} \label{coridemp} Let $P,Q \in \Id(\X).$ Then $PQ=Q$ if and only if $\mul P \subseteq \mul Q$ and $\ran Q \subseteq \ran P.$
\end{cor}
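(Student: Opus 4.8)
The plan is to reduce the statement to Proposition \ref{idemp} by taking inverses, exactly as the one-line proofs of the earlier corollaries do. The key observation is that for $P,Q\in\Id(\X)$ we have $P^{-1},Q^{-1}\in\Id(\X)$ (quoted from \cite{Labrousse} in the excerpt), and that the inverse operation turns a product into the product of the inverses in the opposite order: $(PQ)^{-1}=Q^{-1}P^{-1}$. Moreover $\ran T=\dom T^{-1}$, $\mul T=\ker T^{-1}$, $\ker T=\mul T^{-1}$ and $\dom T=\ran T^{-1}$, so the equivalence we want is obtained from Proposition \ref{idemp} applied to the pair $Q^{-1},P^{-1}$ in place of $P,Q$.

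Concretely, I would argue as follows. The equality $PQ=Q$ holds if and only if $(PQ)^{-1}=Q^{-1}$, i.e. if and only if $Q^{-1}P^{-1}=Q^{-1}$. Applying Proposition \ref{idemp} with the idempotents $Q^{-1}$ (playing the role of ``$Q$'') and $P^{-1}$ (playing the role of ``$P$''), this last equality is equivalent to
\[
\ker P^{-1}\subseteq\ker Q^{-1}\quad\text{and}\quad \dom Q^{-1}\subseteq\dom P^{-1}.
\]
Now translate back via $\ker T^{-1}=\mul T$ and $\dom T^{-1}=\ran T$: the first inclusion becomes $\mul P\subseteq\mul Q$ and the second becomes $\ran Q\subseteq\ran P$. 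This is exactly the asserted characterization, so the proof is complete.

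The only point requiring a moment's care—and the closest thing to an obstacle—is the bookkeeping of which relation takes which role when Proposition \ref{idemp} is invoked: Proposition \ref{idemp} reads ``$QP=Q$ iff $\ker P\subseteq\ker Q$ and $\dom Q\subseteq\dom P$'', and here the product in question is $Q^{-1}P^{-1}$ with the \emph{left} factor being $Q^{-1}$, so $Q^{-1}$ must be matched with the ``$Q$'' of the proposition and $P^{-1}$ with the ``$P$''. Getting this matching right is what produces $\ran Q\subseteq\ran P$ (rather than the reverse) in the final statement. Everything else—$(PQ)^{-1}=Q^{-1}P^{-1}$ and the four duality identities between domain/range/kernel/multivalued part of $T$ and $T^{-1}$—is standard and already recorded in Section 2, so the argument is genuinely two lines once the roles are fixed.
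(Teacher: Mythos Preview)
Your proof is correct and follows exactly the approach of the paper, whose proof reads ``Take inverses and apply Proposition \ref{idemp}.'' You have simply spelled out the bookkeeping that makes this one-line argument work.
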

\begin{proof}
Take inverses and apply Proposition \ref{idemp}.
\end{proof}

\begin{prop} \label{multproy} Let $P , Q \in \Mp(\X).$ If $P$ and $Q$ commute then
$$PQ=P_{\ran P \cap \ran Q, \ker Q + \ran Q \cap \ker P}.$$
\end{prop}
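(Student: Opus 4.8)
The plan is to compute the product $PQ$ directly from the parametric description of multivalued projections and then show it coincides with the claimed $P_{\M,\N}$ with $\M=\ran P\cap\ran Q$ and $\N=\ker Q+\ran Q\cap\ker P$. First I would write $Q=P_{\ran Q,\ker Q}$ and $P=P_{\ran P,\ker P}$, so that a pair $(x,y)$ lies in $PQ$ exactly when there is $z$ with $(x,z)\in Q$ and $(z,y)\in P$; by the Cross–Labrousse proposition this means $x=m_Q+n_Q$, $z=m_Q$ (modulo $\mul Q=\ran Q\cap\ker Q$) and $z=m_P+n_P$, $y=m_P$ (modulo $\mul P$), with $m_Q\in\ran Q$, $n_Q\in\ker Q$, $m_P\in\ran P$, $n_P\in\ker P$. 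The key structural input is commutativity, which I expect to use in the form: since $PQ=QP$, the range $\ran PQ$, the kernel $\ker PQ$ and the multivalued part are forced to be symmetric in $P$ and $Q$, and more concretely $PQ$ will again be a multivalued projection — the ranges and kernels being intersections/sums as stated. I would verify $PQ\in\Mp(\X)$ first: idempotency of $PQ$ follows from $(PQ)^2=P(QP)Q=P(PQ)Q=P^2Q^2=PQ$ using $P^2=P$, $Q^2=Q$ and commutativity, and $\ran PQ\subseteq\dom PQ$ should follow from the range/domain computations together with $\ran P\subseteq\dom P$, $\ran Q\subseteq\dom Q$.

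Once $PQ$ is known to be a multivalued projection, by the uniqueness clause of the Cross–Labrousse proposition it suffices to identify $\ran PQ$ and $\ker PQ$. For the range: $y\in\ran PQ$ iff $y\in\ran P$ and $y=P(z)$ for some $z\in\ran Q$ modulo multivalued parts; using commutativity I would argue $\ran PQ=\ran QP\subseteq\ran Q$ as well, giving $\ran PQ\subseteq\ran P\cap\ran Q$, and conversely any $v\in\ran P\cap\ran Q$ satisfies $(v,v)\in P$ (since $v\in\ran P\subseteq\dom P$ and $P$ restricted to $\ran P\cap\dom P$ is the identity up to $\mul P$, cf. Proposition \ref{repidemp}) and $v\in\ran Q=\dom Q^{-1}$ gives some $x$ with $(x,v)\in Q$, hence $(x,v)\in PQ$. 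For the kernel: $x\in\ker PQ$ iff $(x,0)\in PQ$, i.e. there is $z$ with $(x,z)\in Q$ and $z\in\ker P$ (modulo $\mul P$); writing $x=m_Q+n_Q$ this forces $m_Q\in\ran Q\cap\ker P$ modulo $\ran Q\cap\ker Q$, so $x\in\ker Q+\ran Q\cap\ker P$, and the reverse inclusion is immediate since $\ker Q\subseteq\ker PQ$ trivially and for $w\in\ran Q\cap\ker P$ one has $(w,w)\in Q$ (again by Proposition \ref{repidemp}, as $w\in\ran Q\cap\dom Q$) and $(w,0)\in P$, so $(w,0)\in PQ$.

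I would organize the write-up as: (1) show $PQ\in\Mp(\X)$ via the idempotency computation and the domain/range inclusion; (2) compute $\ran PQ=\ran P\cap\ran Q$ using commutativity for one inclusion and Proposition \ref{repidemp} for the other; (3) compute $\ker PQ=\ker Q+\ran Q\cap\ker P$ similarly; (4) conclude by uniqueness of the multivalued projection with prescribed range and kernel. The main obstacle I anticipate is the bookkeeping of multivalued parts in step (1)–(3): because $\mul P$ and $\mul Q$ are in general nonzero, equalities like $(v,v)\in P$ for $v\in\ran P\cap\dom P$ must be invoked carefully through Proposition \ref{repidemp}, and one must make sure the commutativity hypothesis is genuinely used where the naive operator argument would otherwise fail — in particular in pinning down $\ran PQ\subseteq\ran Q$ and in the identity $\ker PQ=\ker Q+\ran Q\cap\ker P$ rather than some asymmetric variant. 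A clean alternative for the two inclusions $\ran PQ\subseteq\ran Q$ and $\dom PQ\supseteq\dom QP$ is to pass to inverses, since $(PQ)^{-1}=Q^{-1}P^{-1}$ and $P^{-1},Q^{-1}$ are again commuting multivalued projections, which lets one reuse Proposition \ref{productproj} and its corollary verbatim.
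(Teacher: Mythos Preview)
Your proposal is correct and follows essentially the same route as the paper: verify $(PQ)^2=PQ$ from commutativity, show $\ran PQ=\ran P\cap\ran Q$ via the two inclusions you describe, identify $\ker PQ$, check $\ran PQ\subseteq\dom PQ$, and conclude by uniqueness of the multivalued projection with given range and kernel. The only cosmetic difference is that the paper obtains $\ker PQ=\ker Q+\ran Q\cap\ker P$ (and the inclusion $\ran P\cap\ran Q\subseteq\dom PQ$) by a direct appeal to Proposition~\ref{productproj} rather than the elementwise argument you sketch.
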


\begin{proof} 
	First, notice that  $\ran P \cap \ran T \subseteq \ran PT$ for all $T \in \lr(\X)$. In fact, if $y \in \ran T\cap \ran P$ then  $(x,y) \in T$ and $(x',y) \in P$ for some $x, x' \in \X.$ So that $(y,y) \in P$ because $P \in \Mp(\X).$ Then  $(x,y) \in PT$ and $y \in \ran PT.$
	
	Now, assume that  $P$ and $Q$ commute. Then $(PQ)^2=PQ$ so that $PQ\in \Id(\X). $ Moreover, $\ran P \cap \ran Q \subseteq \ran PQ =\ran QP\subseteq \ran P \cap \ran Q$, i.e., $\ran PQ =\ran P \cap \ran Q$ and, by Proposition \ref{productproj}, $\ker PQ=\ker Q + \ran Q \cap \ker P.$  Finally, since $\ran P \cap \ran Q \subseteq \dom P\cap \ran Q \subseteq \dom PQ,$ the result follows.
\end{proof}

	\begin{cor} If  $\PMN$ commute with some $Q\in\cQ$ with $\ran Q=\St$ then
		$$\St\cap(\M+\N)=\St\cap\M+\St\cap\N.$$
	\end{cor}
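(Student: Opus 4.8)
The plan is to apply Proposition \ref{multproy} with $P = \PMN$ and the given $Q \in \cQ$ having $\ran Q = \St$. Since $Q$ is a projection, it is in particular a multivalued projection, so the hypotheses of Proposition \ref{multproy} are met once we assume $\PMN$ and $Q$ commute. That proposition then tells us $\PMN Q = P_{\ran \PMN \cap \ran Q,\, \ker Q + \ran Q \cap \ker \PMN}$, i.e. a multivalued projection whose range is $\M \cap \St$. The key observation is then to compute $\mul(\PMN Q)$ in two ways. On one hand, for any multivalued projection $P_{\mathcal A,\mathcal B}$ we have $\mul P_{\mathcal A,\mathcal B} = \mathcal A \cap \mathcal B$, so from the formula we get $\mul(\PMN Q) = (\M\cap\St)\cap(\ker Q + \St\cap\ker\PMN)$.

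The other computation of the multivalued part should come from Proposition \ref{idemp}'s dual, Corollary \ref{coridemp}, or more directly from the formula $\mul QT = \mul Q + \dom Q \cap \mul T$ in the corollary following Proposition \ref{productproj}. Actually the cleaner route: since $Q\in\cQ$ we have $\mul Q=\{0\}$ and $\ran Q = \St$, and since $Q$ is a projection defined on all relevant vectors... Let me instead take the dual formula. We have $\PMN Q = QP$ by commutativity; apply the corollary after Proposition \ref{productproj} (with roles $Q\leftrightarrow P_{\M,\N}$, $T \leftrightarrow Q$) to get $\mul(\PMN Q)$. But the slickest path is: by Proposition \ref{multproy} the range of $PQ$ is $\M\cap\St$, and its domain contains $\ran P\cap\ran Q = \M\cap\St$; since $PQ$ is a multivalued projection with range $\M\cap\St$, and $\mul(PQ) = \ran(PQ)\cap\ker(PQ) = (\M\cap\St)\cap(\ker Q + \St\cap\ker\PMN)$, while on the other hand $\mul(PQ) \subseteq \mul P = \M\cap\N$. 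Hmm — I need the reverse containment too.

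The intended identity is $\St\cap(\M+\N) = \St\cap\M + \St\cap\N$. I would derive it as follows. Note $\dom\PMN = \M+\N$, so $\dom(\PMN Q)$, by Proposition \ref{productproj} applied to $Q$ (here $Q$ is the idempotent, $\PMN$ the other relation — or rather apply it in the form $\dom TQ = \ker Q + \ran Q\cap\dom T$ with $T=\PMN$, $\ran Q=\St\subseteq\dom Q$ since $Q$ is a multivalued projection), equals $\ker Q + \St\cap(\M+\N)$. But also $\PMN Q = QP_{\M,\N}$, and computing this product's domain the other way, or using that $\PMN Q = P_{\M\cap\St,\,\ker Q+\St\cap\ker\PMN}$ whose domain is $(\M\cap\St) + (\ker Q + \St\cap\ker\PMN) = \M\cap\St + \ker Q + \St\cap\N$ (using $\St\cap\ker\PMN = \St\cap\N$). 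Equating the two expressions for $\dom(\PMN Q)$ and intersecting appropriately — or rather, since $\ker Q \oplus \St$ is a direct-sum decomposition of $\dom Q$ and everything in sight decomposes compatibly along it — we can cancel the $\ker Q$ summand (using statement 1. of the third Lemma, on direct sums) to conclude $\St\cap(\M+\N) = \St\cap\M + \St\cap\N$.

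The main obstacle I anticipate is the bookkeeping around the $\ker Q$ summand: to cancel it and isolate the $\St$-parts I need that $\St\cap(\M+\N)$, $\St\cap\M$, and $\St\cap\N$ all lie inside $\St$, which is a complement of $\ker Q$ inside $\dom Q$, and that the two domain expressions genuinely coincide as subspaces of $\ker Q\dotplus\St$. Concretely, from $\ker Q + \St\cap(\M+\N) = \ker Q + (\St\cap\M + \St\cap\N)$ with both $\St$-parts contained in $\St$ and $\ker Q\cap\St=\{0\}$, one applies the modular-type cancellation (Lemma, part 1, after checking the easy inclusion $\St\cap\M+\St\cap\N\subseteq\St\cap(\M+\N)$) to finish. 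Everything else is a direct substitution into Proposition \ref{multproy} and Proposition \ref{productproj}, so the proof should be short.

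\begin{proof}
Since $Q \in \cQ$ is in particular a multivalued projection and, by hypothesis, commutes with $\PMN \in \Mp(\X)$, Proposition \ref{multproy} gives
$$\PMN Q = P_{\M \cap \St,\ \ker Q + \St \cap \N},$$
where we have used $\ran Q = \St$ and $\ker \PMN = \N$, so that $\ran Q \cap \ker \PMN = \St \cap \N$. In particular $\dom(\PMN Q) = (\M \cap \St) + \ker Q + \St \cap \N$.

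On the other hand, since $Q$ is a multivalued projection, $\ran Q \subseteq \dom Q$, so Proposition \ref{productproj} applies to $T = \PMN$ and yields
$$\dom(\PMN Q) = \ker Q + \St \cap \dom \PMN = \ker Q + \St \cap (\M + \N),$$
because $\dom \PMN = \M + \N.$

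Equating both expressions for $\dom(\PMN Q)$ we obtain
$$\ker Q + \St \cap (\M + \N) = \ker Q + \big(\St \cap \M + \St \cap \N\big).$$
Now $\St \cap \M + \St \cap \N \subseteq \St \cap (\M + \N) \subseteq \St$, and since $\ran Q = \St$ and $Q$ is a projection, $\ker Q \cap \St = \{0\}$; hence both sums above are direct sums over $\ker Q$. Since moreover $\St \cap \M + \St \cap \N \subseteq \St \cap (\M + \N)$, statement 1 of the Lemma on direct sums applies and gives
$$\St \cap (\M + \N) = \St \cap \M + \St \cap \N,$$
as claimed.
\end{proof}
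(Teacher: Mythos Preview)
Your proof is correct and follows essentially the same route as the paper's: compute $\dom(\PMN Q)$ two ways---once via Proposition \ref{multproy} as the domain of $P_{\M\cap\St,\ \ker Q+\St\cap\N}$, and once via Proposition \ref{productproj} as $\ker Q+\St\cap(\M+\N)$---then cancel the direct summand $\ker Q$ using the lemma on direct sums. The paper writes $\T$ for $\ker Q$ and is slightly terser about why the sums are direct, but the argument is the same.
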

	\begin{proof} Let $Q=P_{\St \pl \T}$ with $\T=\ker Q.$
		By Proposition \ref{multproy}, $ \PMN P_{\St \pl \T}=P_{\M\cap\St, \T+\St\cap\N}$ and so $\dom (\PMN P_{\St \pl\T})=(\M\cap\St+\St\cap\N)\dot{+} \T$. On the other hand, by Proposition \ref{productproj}, $\dom (\PMN P_{\St \pl \T})=(\M+\N)\cap\St\dot{+}\T.$ Hence, $(\M\cap\St+\St\cap\N)\dot{+} \T=(\M+\N)\cap\St\dot{+}\T.$ Therefore, $\St\cap(\M+\N)=\St\cap\M+\St\cap\N.$
	\end{proof}

	\begin{lema}\label{lem1} Let $T\in \lr(\X)$ and $\St$ be a subspace of $\X.$ If $\ran T\cap \St\subseteq \mul T$ then $P_{\ran T,\St}T=T.$ Similarly, if $\dom T\subseteq \St+\ker T$ then $T=TP_{\St\cap\dom T,\ker T}.$
\end{lema}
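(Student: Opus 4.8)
\textbf{Proof plan for Lemma \ref{lem1}.}

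The plan is to prove the first assertion and then obtain the second by duality (passing to inverses), exactly as the paper does for its other paired statements. For the first assertion, write $R := P_{\ran T,\St}$, so that $R$ is the multivalued projection with range $\ran T$ and kernel $\St$, i.e. $R = \{(a+s,a) : a \in \ran T,\ s \in \St\}$. I would use Lemma \ref{lemalr}: it suffices to check $RT \subseteq T$, together with $\dom T \subseteq \dom RT$ and $\mul T \subseteq \mul RT$.

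For the inclusion $RT \subseteq T$: take $(x,z) \in RT$, so there is $y$ with $(x,y) \in T$ and $(y,z) \in R$. By the description of $R$, $y = a+s$ and $z = a$ for some $a \in \ran T$, $s \in \St$; moreover $y \in \ran T$ since $(x,y)\in T$, so $s = y - a \in \ran T \cap \St \subseteq \mul T$ by hypothesis. Hence $(0,s) \in T$, and therefore $(x,z) = (x,y) - (0,s) \in T$. For the domain condition: if $x \in \dom T$, pick $y$ with $(x,y) \in T$; then $y \in \ran T \subseteq \dom R$ (here one uses that $R$ is a multivalued projection, so $\ran R \subseteq \dom R$, and $\dom R = \ran T + \St \supseteq \ran T$), so there is $z$ with $(y,z) \in R$, giving $(x,z) \in RT$ and $x \in \dom RT$. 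For the multivalued part: if $y \in \mul T$ then $(0,y) \in T$ and, since $y \in \ran T$, $(y,y) \in R$ (range vectors are fixed by the multivalued projection onto that range), so $(0,y) \in RT$, i.e. $y \in \mul RT$. Combining the three facts with Lemma \ref{lemalr} gives $RT = T$.

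For the second assertion, apply the first to $T^{-1}$ in place of $T$: note $\dom T^{-1} = \ran T$, $\ran T^{-1} = \dom T$, $\mul T^{-1} = \ker T$, and $(P_{\M,\N})^{-1} = P_{\N,\M}$. The hypothesis $\dom T \subseteq \St + \ker T$ becomes $\ran T^{-1} \subseteq \St + \mul T^{-1}$, which is precisely the condition $\ran T^{-1} \cap \widetilde{\St} \subseteq \mul T^{-1}$ would play — more directly, one applies the first statement with the subspace chosen so that $P_{\ran T^{-1},\,\cdot\,}T^{-1} = T^{-1}$ transposes to $T = T P_{\St \cap \dom T,\ker T}$; taking inverses in the identity obtained from part one and using the product rule $(RT)^{-1} = T^{-1}R^{-1}$ delivers the claim. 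I do not expect any serious obstacle here; the only point requiring a little care is the domain/multivalued bookkeeping in Lemma \ref{lemalr} (one cannot conclude $RT = T$ from $RT \subseteq T$ alone), and the precise matching of hypotheses under inversion in the second part.
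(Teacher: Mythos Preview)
Your argument for the first assertion is correct and matches the paper's: both show $P_{\ran T,\St}T\subseteq T$ by observing that the defect $y-z$ lies in $\ran T\cap\St\subseteq\mul T$. The paper then simply notes that $T\subseteq P_{\ran T,\St}T$ always holds (since every $y\in\ran T$ satisfies $(y,y)\in P_{\ran T,\St}$), while you reach the same conclusion via Lemma~\ref{lemalr}; this is only a cosmetic difference.

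The gap is in the second assertion. The two halves of the lemma are \emph{not} dual to one another, and your inversion scheme cannot be completed. Concretely, $(P_{\M,\N})^{-1}=\{(m,m+n):m\in\M,\ n\in\N\}$ has domain $\M$ and multivalued part $\N$; it is an idempotent but in general \emph{not} a multivalued projection, and it is not of the form $P_{\N,\M}$. If you apply the first part to $T^{-1}$ with some subspace $\St'$, the hypothesis becomes $\dom T\cap\St'\subseteq\ker T$ and the conclusion is $T=T\,(P_{\dom T,\St'})^{-1}$. For this factor to equal $P_{\St\cap\dom T,\ker T}$ you would need, among other things, its domain $\dom T+\St'$ to equal $\St\cap\dom T$, forcing $\dom T\subseteq\St$---which is not assumed. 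Likewise the hypothesis $\dom T\subseteq\St+\ker T$ is not the inverse of $\ran T\cap\St\subseteq\mul T$; the latter dualizes to $\dom T\cap\St\subseteq\ker T$, a genuinely different condition. So the ``precise matching of hypotheses under inversion'' that you flagged as needing care in fact fails. The paper avoids this by proving the second assertion directly: given $(x,y)\in T$ with $x=s+n$, $s\in\St$, $n\in\ker T$, one has $s\in\St\cap\dom T$, $(x,s)\in P_{\St\cap\dom T,\ker T}$ and $(s,y)\in T$, giving $T\subseteq TP_{\St\cap\dom T,\ker T}$; the reverse inclusion is equally elementary.
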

\begin{proof}
	Assume that $\ran T\cap \St\subseteq \mul T,$ and let $(x,y)\in P_{\ran T,\St}T$. Then, $(x,w)\in T$ and $(w,y)\in P_{\ran T,\St}$ for some $w\in \ran T.$  Thus, $(w,w)\in P_{\ran T,\St}$ and so $w-y\in \mul P_{\ran T,\St}=\ran T \cap \St \subseteq \mul T $. Hence, $(0,w-y)\in T$ and so $(x,y)\in T.$ The opposite inclusion always holds. 
	
	Similarly, assume now that $\dom T\subseteq \St+\ker T$ and let $(x,y)\in T. $ Then $x=s+n$ with $s\in\St, n\in\ker T$. Hence, $(s,y)\in T$ and $(x,s)\in P_{\St\cap\dom T,\ker T}. $ Thus, $(x,y)\in TP_{\St\cap\dom T,\ker T}.$ Conversely, if $(x,y)\in TP_{\St\cap\dom T,\ker T}$ then $(x,s)\in P_{\St\cap\dom T,\ker T}$ and $(s,y)\in T$  where  $s\in \St\cap\dom T $ satisfies that $x=s+n$ for some $n\in\ker T$. Hence, $(n,0)\in T$ and $(x,y)=(s+n,y)\in T$.
\end{proof}

\section{Linear selections of idempotents relations}

In this section, we introduce the concept of linear selections, a notion that plays a central role in extending classical operator results to the more general framework of linear relations.

\begin{Def}[\cite{LN}] Let $T\in \lr(\X)$ . A linear subspace $T_0\subseteq T$ is called a {\it linear
	selection} or an {\it algebraic operator part} of $T$ if $T=T_0\hat{\dotplus}(\{0\}\times \mul T).$ 
\end{Def}

Clearly, linear selections are single-valued relations, i.e., operators. It is well known that every linear relation admits a linear selection \cite{LN}. For completeness, we include a proof of this fact. Recall that if $\N \subseteq \T$ are linear subspaces, there exists a subspace $\St$ such that $\T = \St \dotplus \N$. The existence of $\St$ follows from Zorn’s lemma. Consequently, given such $\N$ and $\T$, we can always define a projection $Q$ with $\dom Q = \T$ and $\ker Q = \N$.

\begin{prop}\label{seleccion}
	Let $T\in\lr(\X)$ and $Q$ be any projection with $\dom Q=\ran T$ and $\ker Q=\mul T.$ Then $T_0:=QT$ is a linear selection of $T$ with $\ran T_0=\ran Q$ and $\ker T_0=\ker T.$
\end{prop}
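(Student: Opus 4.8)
The plan is to verify the three defining conditions for $T_0 = QT$ to be a linear selection of $T$: namely $T_0 \subseteq T$, $\dom T_0 = \dom T$, and $T = T_0 \,\hat{\dotplus}\, (\{0\}\times\mul T)$; and along the way compute $\ran T_0$ and $\ker T_0$. First I would show $T_0 \subseteq T$. If $(x,y)\in QT$ then $(x,z)\in T$ and $(z,y)\in Q$ for some $z\in\ran T = \dom Q$; since $Q$ is a projection with kernel $\mul T$, we have $z-y\in\ker Q = \mul T$, hence $(0,z-y)\in T$, and so $(x,y) = (x,z) - (0,z-y)\in T$. Next, $\dom T_0 = \dom T$: the inclusion $\dom QT\subseteq\dom T$ is clear, and conversely if $x\in\dom T$ then $(x,z)\in T$ for some $z$, and since $z\in\ran T = \dom Q$ there is $y$ with $(z,y)\in Q$, giving $(x,y)\in QT$.

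Then I would identify $\ran T_0$ and $\ker T_0$. For the range: $\ran T_0 = \ran QT \subseteq \ran Q$ trivially; conversely, given $y\in\ran Q = \dom Q = \ran T$, pick $(z,y)\in Q$ with $(x,z)\in T$ for some $x$ (possible since $z\in\ran T$... more carefully, $y\in\ran Q$ means $(z,y)\in Q$ for some $z\in\dom Q = \ran T$, so $(x,z)\in T$ for some $x$), whence $(x,y)\in QT$ and $y\in\ran T_0$. For the kernel: $\ker T \subseteq \ker QT$ since if $(x,0)\in T$ then $0\in\ran T = \dom Q$ and $(0,0)\in Q$ gives $(x,0)\in QT$; conversely if $(x,0)\in QT$ then $(x,z)\in T$ with $(z,0)\in Q$, so $z\in\ker Q = \mul T$, hence $(0,z)\in T$ and $(x,0) = (x,z)-(0,z)\in T$, i.e. $x\in\ker T$.

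Finally I would establish $T = T_0 \,\hat{\dotplus}\, (\{0\}\times\mul T)$. The inclusion $T_0 \,\hat{+}\, (\{0\}\times\mul T)\subseteq T$ follows from $T_0\subseteq T$ together with $\{0\}\times\mul T\subseteq T$. For the reverse inclusion, take $(x,y)\in T$; since $y\in\ran T = \dom Q$, there is $w$ with $(y,w)\in Q$, so $(x,w)\in QT = T_0$, and $y - w\in\ker Q = \mul T$ (as $(y,y)-(y,w) = (0,y-w)\in Q$ forces $y-w\in\ker Q$, using that $Q$ is a projection so $(y,y)\in Q$), giving $(x,y) = (x,w) + (0,y-w)\in T_0 \,\hat{+}\, (\{0\}\times\mul T)$. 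For directness of the sum, suppose $(0,v)\in T_0\cap(\{0\}\times\mul T)$; since $T_0$ is single-valued (being contained in the graph... actually $T_0 = QT$ with $Q$ a projection and we should note $\mul T_0 \subseteq \mul Q = \{0\}$, so $T_0$ is an operator), we get $v = 0$. This last observation that $T_0$ is single-valued — $\mul QT = Q(\mul T) \subseteq \mul Q = \{0\}$ since $\mul T\subseteq\ran T = \dom Q$ — is the one place requiring a small computation; everything else is a direct chase through the definitions, and I anticipate the only mild obstacle is keeping the projection identity $(z,y)\in Q \Rightarrow z-y\in\ker Q$ (valid because $z\in\dom Q$ forces $(z,z)\in Q$, as $\dom Q = \ran Q$ and $Q$ is idempotent) applied consistently.
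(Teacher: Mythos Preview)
Your approach is essentially the paper's: verify $T_0\subseteq T$, $\dom T_0=\dom T$, $\ran T_0=\ran Q$, $\ker T_0=\ker T$, and then the direct sum decomposition. The paper gets the reverse inclusion $T\subseteq T_0\,\hat+\,(\{0\}\times\mul T)$ via Lemma~\ref{lemalr} (matching domains and multivalued parts) rather than your element chase, but that is a cosmetic difference.

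Two justifications need repair, though. First, your closing parenthetical claims ``$z\in\dom Q$ forces $(z,z)\in Q$, as $\dom Q=\ran Q$'': this is false, since here $\dom Q=\ran T$ while $\ran Q$ is a proper complement of $\mul T$ inside $\ran T$. The identity $(z,y)\in Q\Rightarrow z-y\in\ker Q$ is correct, but the reason is that $y\in\ran Q$ gives $(y,y)\in Q$ (idempotence), whence $(z-y,0)=(z,y)-(y,y)\in Q$. Second, in the decomposition step you write ``$(y,y)-(y,w)=(0,y-w)\in Q$ forces $y-w\in\ker Q$''; this is wrong on both counts: $(y,y)\in Q$ is not available (you only know $y\in\dom Q$), and $(0,y-w)\in Q$ would yield $y-w\in\mul Q=\{0\}$, not $\ker Q$. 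The correct line is $(y,w)-(w,w)=(y-w,0)\in Q$, using that $w\in\ran Q$ so $(w,w)\in Q$. With these two fixes your argument goes through.
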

\begin{proof} Let $Q$ be any projection with $\dom Q=\ran T$ and $\ker Q=\mul T$, i.e., $Q=P_{\St//\mul T}$ for some $\St$ such that $\ran T=\St\dotplus\mul T.$ Define $T_0:=QT \in \op(\X)$. Note that, since $\dom Q=\ran T,$ $\dom T_0=\dom T$ and $\ran T_0=\ran Q.$ Moreover, if $x\in\ker T_0$ then $(x,y)\in T$ for some $y\in\ker Q=\mul T.$ Thus, $(0,y)\in T$ and so $(x,0)\in T.$ Then $\ker T_0\subseteq \ker T$ and so $\ker T_0=\ker T.$ 
	
  We claim that $T=T_0\hat{\dotplus}(\{0\}\times\mul T).$  In fact, if $(x,y)\in T_0=QT$ then $(x,z)\in T $ and $(z,y)\in Q$ for some $z\in\X.$ As $(y,y)\in Q$, then $z-y\in \ker Q=\mul T.$ Thus $(0,z-y)\in T$ and so $(x,y)\in T.$ Therefore $T_0\subseteq T$ and so $T_0\hat{\dotplus}(\{0\}\times\mul T)\subseteq T$. Now, as $\dom T=\dom T_0$ and $\mul T\subseteq \mul( T_0\hat{+}(\{0\}\times\mul T))$, we get that $T=T_0\hat{+}(\{0\}\times\mul T).$ Finally, if $(0,y)\in T_0$ with $y\in\mul T$ then $y\in \ran Q\cap \ker Q=\{0\}$. 
\end{proof}

From now on, we denote by  $\ls(T)$ the set of linear selections of $T\in \lr(X)$.
The following result characterizes the linear relations that have projections as linear selections. 

\begin{prop}\label{op-decomp} The following hold:

	\begin{equation}\label{super3}
		\{T\in\lr(\X): T\subseteq T^2\}=\{P_0 \hat{\dotplus}(\{0\}\times \T): P_0\in\cQ, \T\subseteq \X\}.
	\end{equation}
\begin{equation}\label{selecIdemp}
	\Id(\X)=\{P_0\hat{\dotplus}(\{0\}\times \T): P_0\in\cQ \; {\text and} \; \T\cap \dom P_0=\ker P_0\cap (\ran P_0+\T) \}.
\end{equation}
	\begin{equation}\label{selecMP}
		\Mp(\X)=\{P_0\hat{\dotplus}(\{0\}\times \T): P_0\in\cQ \; {\text and} \ \T\subseteq \ker P_0\}.
	\end{equation}

\end{prop}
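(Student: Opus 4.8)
The plan is to prove the three set equalities by establishing two inclusions in each case, relying heavily on Proposition~\ref{seleccion} (which produces a linear selection of the form $QT$ with $Q$ a projection) and on the structural description in Proposition~\ref{repidemp}. For \eqref{super3}, first take $T\in\lr(\X)$ with $T\subseteq T^2$. Pick a linear selection $T_0\in\ls(T)$, so $T=T_0\hat{\dotplus}(\{0\}\times\mul T)$; I would show $T_0$ is forced to be a projection. Indeed, since $T\subseteq T^2$ one checks $\dom T\subseteq\ran T$ and $\ker T=\ker T_0$, and then from $T_0^2\subseteq T^2$ together with $T\subseteq T^2$ one deduces that $T_0$ restricted to $\ran T_0$ is the identity, i.e. $(y,y)\in T_0$ for all $y\in\ran T_0$; combined with $T_0$ single-valued this gives $T_0^2=T_0$, so $T_0\in\cQ$. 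Conversely, if $T=P_0\hat{\dotplus}(\{0\}\times\T)$ with $P_0\in\cQ$, a direct computation using Lemma~\ref{sumasubs} and part~2 of the third preliminary Lemma shows $T^2\supseteq P_0^2\hat{+}(\{0\}\times\T)=P_0\hat{+}(\{0\}\times\T)=T$, since $\ran P_0\subseteq\dom P_0$ absorbs the multivalued summand; so $T\subseteq T^2$.

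For \eqref{selecIdemp} and \eqref{selecMP}, the idea is the same but now I need the extra conditions that upgrade $T\subseteq T^2$ to $T=T^2$ (resp. to $\ran T\subseteq\dom T$). Write $T=P_0\hat{\dotplus}(\{0\}\times\T)$ with $P_0\in\cQ$ and $\T\subseteq\X$. Compute $\dom T=\dom P_0$, $\ran T=\ran P_0+\T$, $\ker T=\ker P_0$, $\mul T=\T$. Then compute $\dom T^2=\{x\in\dom P_0: P_0x\in\dom P_0+\T\}$ — more precisely, using $T(x)=P_0x+\T$ one gets $\dom T^2=\{x\in\dom T: (P_0x+\T)\cap\dom T\neq\varnothing\}$. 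Since $T\subseteq T^2$ always holds here, Lemma~\ref{lemalr} tells us $T=T^2$ iff $\dom T^2\subseteq\dom T$ and $\mul T^2\subseteq\mul T$; the multivalued-part condition turns out to be automatic, so the whole content is the domain condition, which after unwinding becomes exactly $\T\cap\dom P_0=\ker P_0\cap(\ran P_0+\T)$. For \eqref{selecMP}, $\ran T\subseteq\dom T$ reads $\ran P_0+\T\subseteq\dom P_0$; since $\ran P_0\subseteq\dom P_0$ already, this is equivalent to $\T\subseteq\dom P_0$, and then $\T\subseteq\ker P_0$ follows (and conversely) because on $\dom P_0$ one can decompose $\T$-vectors and the $\ran P_0$-component must vanish for $\mathbf{Mp}$; I would verify directly that $\T\subseteq\ker P_0$ both implies the idempotent condition of \eqref{selecIdemp} and gives $\ran T\subseteq\dom T$, and conversely. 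Note $\T\subseteq\ker P_0\subseteq\dom P_0$ makes the \eqref{selecIdemp} condition trivially satisfied, so \eqref{selecMP} is consistent as a sub-case of \eqref{selecIdemp}.

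The main obstacle I anticipate is the bookkeeping in the domain computation for \eqref{selecIdemp}: translating $\dom T^2\subseteq\dom T$ into the stated algebraic identity on subspaces requires care because $T$ is genuinely multivalued, so $T^2$ mixes the $P_0$-action with the constant subspace $\T$, and one must track when $P_0x+\T$ meets $\dom P_0$. Concretely, $x\in\dom T^2$ iff there is $t\in\T$ with $P_0x+t\in\dom P_0$, i.e. iff $\T\cap(\dom P_0 - P_0x)\neq\varnothing$; since $P_0x\in\ran P_0\subseteq\dom P_0$, this holds automatically, so every $x\in\dom P_0$ lies in $\dom T^2$ — meaning $\dom T^2=\dom P_0=\dom T$ always, and the real constraint is instead $\mul T^2\subseteq\mul T$, i.e. $T^2(0)\subseteq\T$. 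Computing $T^2(0)=T(\T)=P_0(\T\cap\dom P_0)+\T$, the condition $\mul T^2\subseteq\mul T$ becomes $P_0(\T\cap\dom P_0)\subseteq\T$, i.e. $\T\cap\dom P_0\subseteq\ker P_0+\T$, equivalently $\T\cap\dom P_0\subseteq\ker P_0\cap(\ran P_0+\T)+\T\cap\ker P_0$... — so the delicate point is to massage this into the clean symmetric form $\T\cap\dom P_0=\ker P_0\cap(\ran P_0+\T)$ (one inclusion being automatic since $\T\cap\dom P_0\subseteq\ran T\cap\dom P_0$ and elements of $\T$ are sent to $0$ by the ambient idempotent $T$, forcing them into $\ker P_0$ modulo nothing). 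I would handle this by working inside the algebraic direct sum $\dom P_0=\ran P_0\dotplus\ker P_0$ and splitting every vector of $\T\cap\dom P_0$ accordingly, which makes both inclusions transparent.
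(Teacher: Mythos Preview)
Your plan has a genuine gap in the forward direction of \eqref{super3}. You write ``Pick a linear selection $T_0\in\ls(T)$ \dots\ I would show $T_0$ is forced to be a projection.'' This is false: an arbitrary linear selection of a super-idempotent need not be a projection. Take $\X=\mathbb{K}^2$, $\T=\Span\{e_1\}$, and $T=I\,\hat{\dotplus}\,(\{0\}\times\T)$; then $T=T^2$, but the operator $T_0$ defined by $T_0 e_1=e_1$, $T_0 e_2=e_2+e_1$ is a linear selection of $T$ that is \emph{not} idempotent. Your supporting claims also fail: $T\subseteq T^2$ does not imply $\dom T\subseteq\ran T$ (any projection with nonzero kernel is a counterexample), and $\ker T_0=\ker T$ does not hold for arbitrary selections (in the example above $\ker T_0=\{0\}$ while $\ker T=\T$). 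The right-hand sides in \eqref{super3}--\eqref{selecMP} are \emph{existential}: one must exhibit a particular $P_0$, not argue that every selection works.

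The same issue bites in \eqref{selecIdemp}. You correctly compute that, for a fixed decomposition $T=P_0\,\hat{\dotplus}\,(\{0\}\times\T)$ with $P_0\in\cQ$, idempotency of $T$ is equivalent to $P_0(\T\cap\dom P_0)\subseteq\T$. But this is strictly weaker than $\T\cap\dom P_0=\ker P_0\cap(\ran P_0+\T)$: in the example above with $P_0=I$ one has $P_0(\T\cap\dom P_0)=\T\subseteq\T$, yet $\T\cap\dom P_0=\T\neq\{0\}=\ker P_0\cap(\ran P_0+\T)$. So the ``massaging'' you anticipate cannot succeed; the stated condition is not an invariant of $T$ but depends on the chosen $P_0$, and the inclusion $\Id(\X)\subseteq\{\ldots\}$ again requires producing a \emph{specific} $P_0$ for which the stronger condition holds. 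The paper circumvents this by proving \eqref{selecMP} first---where $\ran P\subseteq\dom P$ lets one pick the selection $P_0=QP$ of Proposition~\ref{seleccion} and verify directly that it is a projection with $\mul P\subseteq\ker P_0$---and then deduces \eqref{super3} and \eqref{selecIdemp} from \eqref{selecMP} together with structural results from \cite{ACMM1} that already package the existence of a good $P_0$.
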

\begin{proof}
	Let us begin by proving (\ref{selecMP}). Let $P\in \Mp(\X)$ and $Q$ be a projection with $\dom Q=\ran P$ and $\ker Q=\mul P.$ Define $P_0:=QP$. By Proposition \ref{seleccion}, $P=P_0\hat{\dotplus}(\{0\}\times\mul P).$ From this, $P_0\subseteq P$ and so $P_0^2=QP P_0\subseteq QP^2=QP=P_0.$ We claim that $\dom P_0\subseteq\dom P_0^2.$ In fact, if $x \in \dom P_0$ then $(x,y) \in P_0$ for some $y \in \ran P_0\subseteq \ran Q \subseteq \dom Q=\ran P \subseteq \dom P=\dom P_0$. Hence $(y,z) \in P_0$ for some $z \in \mc X$ and $(x,z) \in P_0^2,$ i.e., $x\in\dom P_0^2$ as desired. Therefore, $P_0=P_0^2.$  Finally, by Proposition \ref{productproj},  $\ker P_0=\ker P + \ran P \cap \ker Q=\ker P+ \dom Q \cap \ker Q=\ker P + \ker Q\supseteq \ker Q =\mul P.$ 
	
	For the opposite inclusion, let $T:=P_{\St\pl\N}\hat{\dotplus}(\{0\}\times \T)$ with $\St$ and $\N$ subspaces of $\mc X$ such that $\T\subseteq \N$. Thus, $\ran T=\St+\T\subseteq \St+\N=\dom T,$ and $\ker T=\N.$ 
	Then, by applying Lemma \ref{sumasubs}, we get that $T^2=TP_{\St\pl\N} + T(\{0\} \times \T)$, and so by Lemma \ref{lem1}, $T^2=T+ (\{0\} \times \mul T)=T.$
	Therefore $T\in\Mp(\X).$
	
As for \eqref{super3}, let $T:=P_{\St \pl \N}\hat{\dotplus}(\{0\}\times \T).$ Then, by \cite[Corollary 3.14]{ACMM1}, $T\subseteq T^2.$ This gives the inclusion $\supseteq$ in \eqref{super3}.
The opposite inclusion follows by \cite[Corollary 3.14]{ACMM1} and \eqref{selecMP}.
Finally,  \cite[Proposition 3.17]{ACMM1} and  (\ref{super3}) give \eqref{selecIdemp}.
\end{proof}

A linear relations  $T$ such that $T\subseteq T^2$ is called {\it super-idempotent}. This class was studied in \cite{ACMM1}.

\begin{cor} It holds that:
	$$\lr(\X, \Y)=\Mp \op(\X,\Y)$$
\end{cor}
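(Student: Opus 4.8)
The statement to prove is $\lr(\X,\Y)=\Mp\,\op(\X,\Y)$, i.e., every linear relation factors as a multivalued projection composed with an operator.

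The plan is to prove the two inclusions separately. The inclusion $\Mp\,\op(\X,\Y)\subseteq\lr(\X,\Y)$ is immediate since a product of linear relations is a linear relation. The substantive direction is $\lr(\X,\Y)\subseteq\Mp\,\op(\X,\Y)$: given $T\in\lr(\X,\Y)$, I must exhibit $P\in\Mp(\Y)$ and $B\in\op(\X,\Y)$ with $T=PB$. The natural candidate is to take a linear selection $T_0\in\ls(T)$, which exists by Proposition \ref{seleccion}, and then build a multivalued projection $P$ on $\Y$ that "re-inflates" the single-valued $T_0$ back to $T$ by adding the multivalued part. Concretely, I would try $P:=P_{\ran T_0,\,\{0\}}\hat{+}(\{0\}\times\mul T)$ — or, more robustly, a multivalued projection whose range contains $\ran T_0$ and whose multivalued part is $\mul T$ — and set $B:=T_0$.

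The key steps, in order: First, invoke Proposition \ref{seleccion} to fix $T_0=QT\in\ls(T)$ for a projection $Q$ with $\dom Q=\ran T$, $\ker Q=\mul T$; recall $\ran T_0=\ran Q$, $\ker T_0=\ker T$, $\dom T_0=\dom T$, and $T=T_0\hat{\dotplus}(\{0\}\times\mul T)$. Second, choose a subspace $\St$ of $\Y$ with $\ran T_0\cap\mul T=\{0\}$ already guaranteed (since $\ran T_0=\ran Q$ and $\ran Q\cap\ker Q=\{0\}$), and define $P:=P_{\ran T_0//\mul T}\hat{\dotplus}(\{0\}\times\mul T)$; by \eqref{selecMP} this is a multivalued projection on $\Y$, since its multivalued summand $\mul T$ is contained in the kernel $\mul T$ of the projection part. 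Third, compute $PT_0$. Using Lemma \ref{sumasubs} to distribute, $PT_0=P_{\ran T_0//\mul T}T_0\,\hat{+}\,(\{0\}\times\mul T)T_0$. Since $\ran T_0\cap\mul T=\{0\}\subseteq\mul T_0=\{0\}$, Lemma \ref{lem1} gives $P_{\ran T_0//\mul T}T_0=T_0$ (noting $\ran T_0$ is exactly the range, so the hypothesis is trivially met); and $(\{0\}\times\mul T)T_0=\ker T_0\times\mul T$. Hence $PT_0=T_0\,\hat{+}\,(\ker T_0\times\mul T)$. Fourth, reconcile this with $T$: since $\ker T_0=\ker T$ and every $(n,0)$ with $n\in\ker T$ lies in $T$, we have $T_0\hat{+}(\ker T\times\mul T)=T_0\hat{+}(\{0\}\times\mul T)=T$ by the last clause of Lemma 2.6(2) together with $\ker T\subseteq\ker T_0$. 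Therefore $T=PT_0\in\Mp(\Y)\op(\X,\Y)$.

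The main obstacle I anticipate is bookkeeping the domains and kernels correctly so that the factorization reconstructs $T$ \emph{on the nose} (not merely up to inclusion): one must verify $\dom PT_0=\dom T$ and $\mul PT_0=\mul T$, invoking Lemma \ref{lemalr} to upgrade an inclusion $PT_0\supseteq T$ (or $\subseteq$) to equality. A secondary subtlety is ensuring $P$ genuinely lands in $\Mp(\Y)$ rather than merely in $\Id(\Y)$ — this is where the condition $\T\subseteq\ker P_0$ in \eqref{selecMP} must be checked, which here holds trivially because the added multivalued part equals the kernel of the projection part. Once these checks are in place the argument is short, and in fact it also shows the stronger fact that one may take the operator factor to have the same domain as $T$.
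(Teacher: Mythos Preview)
Your proposal is correct and follows essentially the same approach as the paper: you take the same linear selection $T_0=QT$ and the same multivalued projection $P=Q\hat{\dotplus}(\{0\}\times\mul T)$ (your $P_{\ran T_0\pl\mul T}$ is precisely the $Q$ of Proposition~\ref{seleccion}, since $\ran T_0=\ran Q$ and $\ker Q=\mul T$). The only cosmetic difference is that you compute $PT_0$ forward via Lemmas~\ref{sumasubs} and~\ref{lem1}, whereas the paper rewrites $T$ backward as $T_0\hat{\dotplus}(\ker T_0\times\mul T)=(Q\hat{\dotplus}(\{0\}\times\mul T))T_0$; both arrive at the same factorization.
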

\begin{proof}
	Let $T\in \lr(\X,\Y)$ and $Q$ be a projection with $\dom Q=\ran T$ and $\ker Q=\mul T.$ Then, by Proposition \ref{seleccion}, $T_0=QT\in\op(\X,\Y)$ and $T=T_0\hat{\dotplus}(\{0\}\times \mul T).$ Thus, $T=QT\hat{\dotplus}(\{0\}\times \mul T)=QT\hat{\dotplus}(\ker QT \times \mul T)=(Q\hat{\dotplus}(\{0\}\times \mul T))QT=(Q\hat{\dotplus}(\{0\}\times \mul T))T_0\in \Mp \op(X,\Y)$, since $Q\hat{\dotplus}(\{0\}\times \mul T)\in\Mp$ because of Proposition \ref{op-decomp}. 
\end{proof}

\section{Factorizations of linear relations}

R.G. Douglas showed in \cite{Douglas} that, for two Hilbert space bounded linear operators $R$ and $S,$ the existence of a bounded linear operator $T$ satisfying  $R = ST$ is equivalent to $\ran R\subseteq \ran S.$ This result  plays a crucial role in understanding different operator factorizations (\cite{AAC},  \cite{ACM}, \cite{Cor}, among others).

 Here we present the proof of Douglas' lemma in the context of operators between linear spaces.

\begin{lema}\label{Douglasoperadores}Let $R \in \op(\X, \Z)$ and $S\in \op(\Y, \Z).$ Then the following are equivalent:
	\begin{enumerate}
		\item There exists $T \in \op(\X, \Y)$ such that $R=ST;$
		\item $\ran R \subseteq \ran S$.
	\end{enumerate}
\end{lema}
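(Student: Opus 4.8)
The plan is to prove the two implications separately, with the substantive content lying in $(2)\Rightarrow(1)$.

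\medskip

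\noindent\textbf{The easy direction.} If $R=ST$ for some $T\in\op(\X,\Y)$, then for every $x\in\dom R$ we have $Rx=S(Tx)\in\ran S$, so $\ran R\subseteq\ran S$. This is immediate from the definition of the product of relations and needs no further comment.

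\medskip

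\noindent\textbf{The main direction $(2)\Rightarrow(1)$.} Assume $\ran R\subseteq\ran S$. The idea is the classical one: define $T$ pointwise on $\dom R$ by choosing, for each $x\in\dom R$, a preimage under $S$ of the vector $Rx$, and then check linearity. Concretely, I would first pick an (algebraic) complement $\mathcal N$ of $\ker S$ in $\dom S$, so that $\dom S=\mathcal N\dotplus\ker S$ and $S|_{\mathcal N}$ is a linear bijection onto $\ran S$; here I invoke the existence of such a complement via Zorn's lemma, exactly as recalled before Proposition~\ref{seleccion}. Equivalently, one may take the projection $P_{\mathcal N/\!\!/\ker S}$ onto $\mathcal N$ along $\ker S$ and set $T:=(S|_{\mathcal N})^{-1}R$, i.e. $T:=P_{\mathcal N/\!\!/\ker S}\,S^{-1}R$. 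Since $\ran R\subseteq\ran S=\ran(S|_{\mathcal N})$, the composition $S^{-1}R$ is defined on all of $\dom R$, and composing with the projection $P_{\mathcal N/\!\!/\ker S}$ selects from each fiber $S^{-1}(Rx)$ the unique representative lying in $\mathcal N$; this makes $T$ single-valued with $\dom T=\dom R$ and $\ran T\subseteq\mathcal N$.

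\medskip

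\noindent It then remains to verify $ST=R$. By construction, for $x\in\dom R$ the element $Tx$ is the unique vector of $\mathcal N$ with $S(Tx)=Rx$, whence $STx=Rx$; this gives $R\subseteq ST$ together with $\dom ST=\dom T=\dom R$. Conversely, if $(x,z)\in ST$ then $x\in\dom T=\dom R$ and, since $T$ is an operator, $z=STx=Rx$, so $(x,z)\in R$; thus $ST\subseteq R$ as well, and $ST=R$. (Alternatively one could finish via Lemma~\ref{lemalr}, checking $\mul ST=\{0\}=\mul R$ and $\dom R\subseteq\dom ST$, but the direct set-theoretic argument is shorter here since everything in sight is single-valued.)

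\medskip

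\noindent\textbf{Main obstacle.} There is no deep obstacle; the only non-elementary ingredient is the existence of the complement $\mathcal N$ of $\ker S$ in $\dom S$, which requires the axiom of choice (Zorn's lemma) in the purely algebraic setting — in contrast to the Hilbert-space version, where $\ran S\subseteq\ran S$ can be upgraded to a \emph{bounded} $T$ and one must work with closures and the Douglas majorization inequality. The care needed is mostly bookkeeping: making sure $T$ is genuinely single-valued (this is precisely what the choice of $\mathcal N$ as a complement, rather than an arbitrary subspace, buys us) and that its domain is exactly $\dom R$ rather than something larger or smaller.
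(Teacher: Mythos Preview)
Your proof is correct and follows essentially the same approach as the paper: both choose an algebraic complement $\mathcal N$ of $\ker S$ in $\dom S$ and take $T$ to be the operator sending $x\in\dom R$ to the unique $S$-preimage of $Rx$ lying in $\mathcal N$. The only difference is cosmetic---the paper defines $T$ directly as the set $\{(x,y)\in\dom R\times\mathcal N:\exists\,z\text{ with }(x,z)\in R,\ (y,z)\in S\}$, whereas you write the same object as the composite $(S|_{\mathcal N})^{-1}R=P_{\mathcal N/\!\!/\ker S}\,S^{-1}R$.
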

\begin{proof}
	 Assume that $\ran R \subseteq \ran S$. Let $\N$ be such that $\dom S=\N \dotplus \ker S.$ Define $T:=\{(x,y)\in\dom R\times \N: \exists z\in \Z \; s.t. \; (x,z)\in R, (y,z)\in S \}$. It is straightforward that $T\in\lr(\X,\Y).$ Moreover, if $(0,y)\in T$ then $(y,0)\in S$ since  $R \in \op(\X, \Z)$. Thus $y\in \N\cap\ker S$, and so $y=0.$ Hence,  $T\in\op(\X,\Y).$
	 
	 Finally,  $R=ST.$ In fact, if $(x,z')\in ST$ then $(x,y)\in T$ and $(y,z')\in S$ for some $y\in \Y.$ Then, $(x,z)\in R$ and $(y,z)\in S$ for some $z\in\Z$.  But since $S\in \op(\Y, \Z),$ we have that  $z=z'$. Thus, $(x,z')\in R$ and so $ST\subseteq R.$ 
	 Conversely, if $(x,z)\in R$ then, as $\ran R \subseteq \ran S$, $(y,z)\in S$ for some $y\in \Y.$ Without loss of generality we can consider $y\in\N$. Thus, $(x,y)\in T$ and so $(x,z)\in TS,$ as desired.
	 
	 The converse is straightforward.  
\end{proof}

A natural question arises as to whether a Douglas-type result can be established for linear relations. In this context, several variants of Douglas’ factorization lemma may be considered. For instance, given $R \in \lr(\X, \Z)$ and $S \in \lr(\Y, \Z)$, one may ask whether there exists $T \in \lr(\X, \Y)$ (or $T \in \op(\X, \Y)$) such that $R \subseteq ST$ or $R = ST$. These cases, among others, were studied in \cite{PS} and \cite{Sando}.

The following result, due to Sandovici et al. \cite{Sando}, will be useful in what follows. We provide an alternative proof using linear selections.

\begin{thm}[{\cite[Theorem 1]{Sando}}] \label{Douglasop}
	Let $R\in\lr(\X,\Y)$ and $S\in \lr(\Z,\Y).$ The following are equivalent:
	\begin{enumerate}
		\item There exists  $T_0\in\op(\X,\Y)$ such that $R=ST_0.$
		\item $\ran R\subseteq \ran S$ and $\mul R=\mul S$.
	\end{enumerate}
\end{thm}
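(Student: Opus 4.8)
The plan is to prove the two implications separately, using the linear selection machinery developed in Section~4.

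\textbf{Implication $(2)\Rightarrow(1)$.} Suppose $\ran R\subseteq\ran S$ and $\mul R=\mul S$. First I would pass from $S$ to a linear selection $S_0\in\ls(S)$. By Proposition~\ref{seleccion}, choosing a projection $Q$ with $\dom Q=\ran S$ and $\ker Q=\mul S$, the relation $S_0:=QS$ is an operator with $\ran S_0=\ran Q$ and $\ker S_0=\ker S$, and $S=S_0\hat{\dotplus}(\{0\}\times\mul S)$. Next I would do the same with $R$: since $\mul R=\mul S=\ker Q$, the operator $R_0:=QR$ is a linear selection of $R$, so $R=R_0\hat{\dotplus}(\{0\}\times\mul R)$. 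The point of applying the \emph{same} $Q$ is that $\ran R_0=Q(\ran R)\subseteq Q(\ran S)=\ran S_0$; here I use $\ran R\subseteq\ran S$. Now $R_0\in\op(\X,\Y)$ and $S_0\in\op(\Z,\Y)$ satisfy $\ran R_0\subseteq\ran S_0$, so Lemma~\ref{Douglasoperadores} (Douglas for operators between linear spaces) yields $T_0\in\op(\X,\Z)$ with $R_0=S_0T_0$. It remains to check $R=ST_0$. Compute $ST_0=(S_0\hat{\dotplus}(\{0\}\times\mul S))T_0=S_0T_0\hat{+}(\{0\}\times\mul S)T_0$ using Lemma~\ref{sumasubs}(2); since $\dom S_0=\dom S$ and $(\{0\}\times\mul S)T_0=\ker T_0\times\mul S$, one gets $ST_0=R_0\hat{+}(\ker T_0\times\mul S)$. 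Because $\dom R_0=\dom R=\dom T_0\supseteq\ker T_0$, part~2 of the third preliminary lemma gives $R_0\hat{+}(\ker T_0\times\mul S)=R_0\hat{+}(\{0\}\times\mul S)=R_0\hat{\dotplus}(\{0\}\times\mul R)=R$. (The disjointness is automatic since $R_0$ is single-valued and $\mul R\cap\{0\}$-fiber meets it only at $0$, as in the end of Proposition~\ref{seleccion}.) This gives $R=ST_0$.

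\textbf{Implication $(1)\Rightarrow(2)$.} Suppose $R=ST_0$ with $T_0\in\op(\X,\Y)$. The inclusion $\ran R=\ran ST_0\subseteq\ran S$ is immediate from the definition of the product. For the multivalued parts, $\mul R=\mul(ST_0)$; if $(0,y)\in ST_0$ then $(0,w)\in T_0$ and $(w,y)\in S$ for some $w$, and since $T_0$ is an operator $w=0$, so $(0,y)\in S$, giving $\mul R\subseteq\mul S$. Conversely, if $y\in\mul S$ then $(0,y)\in S$; since $0\in\dom S$... this direction needs a little care, because $\mul S\subseteq\mul R$ requires $0$ to be hit by $S$ from something in $\ran T_0$, which it always is ($(0,0)\in T_0$), so $(0,y)\in ST_0=R$, i.e. $\mul S\subseteq\mul R$. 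Hence $\mul R=\mul S$.

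\textbf{Expected main obstacle.} The routine-looking but genuinely delicate point is the reduction from $S$ to $S_0$ while keeping $R$ compatible: one must use a \emph{single} projection $Q$ (forced by the hypothesis $\mul R=\mul S$) so that both $R_0=QR$ and $S_0=QS$ are linear selections and the range inclusion $\ran R_0\subseteq\ran S_0$ survives. After that, recovering the full equality $R=ST_0$ from $R_0=S_0T_0$ is a careful bookkeeping exercise with Lemma~\ref{sumasubs}, Lemma~\ref{lemalr}, and the $(\{0\}\times\N)$-arithmetic lemma, tracking domains and multivalued parts. No deeper difficulty is anticipated; the argument is essentially ``select, apply operator Douglas, un-select.''
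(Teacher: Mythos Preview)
Your argument is correct and is essentially the paper's proof: the paper selects $R_0,S_0$ via nested complements $\St\subseteq\St\dotplus\T$ of $\mul R=\mul S$ inside $\ran R\subseteq\ran S$, which is exactly your single-$Q$ device (restricting that $Q$ to $\ran R$ recovers the paper's choice), and then unwinds $R_0=S_0T_0$ to $R=ST_0$ by the same $\ker T_0\times\mul S$ bookkeeping. Two cosmetic points to tighten: Proposition~\ref{seleccion} literally requires $\dom Q=\ran R$, so for $R_0=QR$ you should invoke it with $Q|_{\ran R}$ (this uses $\ran R\subseteq\ran S=\dom Q$); and the step $R_0\hat{+}(\ker T_0\times\mul S)=R_0\hat{+}(\{0\}\times\mul S)$ needs $\ker T_0\subseteq\ker R_0$ (immediate from $R_0=S_0T_0$), not merely $\ker T_0\subseteq\dom R_0$ as you wrote.
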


\begin{proof}
$i)\Rightarrow ii)$:	Assume that $R=ST_0$ with $T_0$ an operator. Then, $\ran R \subseteq \ran S$ and $\mul R = S(\mul T_0)=S(\{0\})=\mul S.$
	
$ii)\Rightarrow i)$: If $\ran R\subseteq \ran S$ and $\mul R=\mul S$ then there exist $\St,\T$ such that $\ran R=\St\dotplus\mul R$ and $\ran S=\T\dotplus\St\dotplus\mul S.$ Therefore, by Proposition \ref{seleccion}, there exist operators $R_0,S_0$ such that $R=R_0\hat{\dotplus}(\{0\}\times\mul R)$ with $\ran R_0=\St$, and $S=S_0\hat{\dotplus}(\{0\}\times\mul S)$
with $\ran S_0=\St\dotplus\T.$ As $\ran R_0\subseteq \ran S_0$,  by Lemma \ref{Douglasoperadores}, there exists an operator $T_0$  such that $R_0=S_0 T_0.$ Thus, $R=R_0\hat{\dotplus}(\{0\}\times\mul R)=S_0 T_0\hat{\dotplus}(\{0\}\times\mul R)=S_0 T_0\hat{\dotplus}(\{0\}\times\mul S)=S_0 T_0\hat{\dotplus}(\ker T_0 \times\mul S)=(S_0 \hat{\dotplus}(\{0\}\times\mul S))T_0=ST_0,$
and $i)$ holds.
 
 Moreover
 \begin{equation}\label{nucleoSR}
 	\{x: S(x)=R(x)\}=\ker(S_0-R_0)
 \end{equation}  In fact, $S(x)=R(x)$ if and only if $S_0(x)+\mul S=R_0(x)+\mul R.$ Thus, given $m\in\mul S$ there exits $n\in\mul R$ such that $S_0(x)+m=R_0(x)+n,$ so $S_0(x)-R_0(x)=n-m\in \ran S_0\cap\mul S=\{0\}$, i.e., $x\in \ker(S_0-R_0).$ 
 Equality (\ref{nucleoSR}) will be useful later.
\end{proof}

\begin{obs}\label{operador} Let $R\in\lr(\X,\Y)$ and $S\in \lr(\Z,\Y)$ be such that $\ran R\subseteq \ran S$ and $\mul R=\mul S$. An alternative method for constructing an operator solution of $R=SX$ is as follows: 

By Proposition \cite[Proposition 2]{Sando}, there exists $T\in \lr(\X,\Z)$ such that $R=ST$. Without loss of generality, we can assume that $\dom T=\dom R.$ 
Let $\St$ be an algebraic complement of $\mul T\cap \dom S$ in  $\ran T\cap \dom S,$  so that
$$\ran T\cap \dom S=\St\dotplus \mul T\cap \dom S.$$
Let $Q$ be the projection with  $\ran Q=\St$ and $\ker Q=\mul T\cap \dom S$. Define $T_0:=QT.$ We claim that $T_0$ is an operator such that $R=ST_0.$ First, let us see that $T_0$ is an operator. To this end, let $(0,y)\in T_0$ so that there exists $z\in\Z$ such that $(0,z)\in T$ and $(z,y)\in Q.$ Thus, $z\in \mul T\cap \dom Q=\mul T\cap \dom S=\ker Q$ and so $y=Qz=0.$ Next, let us prove that $R=ST_0$ or, equivalently, that $ST=ST_0=SQT.$ For this, first notice that $\ker Q \subseteq \ker S.$ Indeed, if $y\in \ker Q= \mul T\cap \dom S$ then $(0,y)\in T$ and $(y,w)\in S$ for some $w\in \Y.$ Thus, $(0,w)\in ST=R$ and, since $\mul R=\mul S$, we get that $(0,w)\in S$. Then $(y,0)=(y,w)-(0,w) \in S.$ Now, let $(x,y)\in ST$ so that $(x,w)\in T$ and $(w,y)\in S$ for some $w\in \ran T\cap\dom S=\dom Q.$ Thus, $w=Qw+(I-Q)w$ where $(I-Q)w\in\ker Q.$ Thus, $((I-Q)w, 0)\in S$ and so $(Qw,y)\in S.$ Summarizing, $(x,w)\in T$, $(w, Qw)\in Q$ and $(Qw,y)\in S.$ Therefore, $(x,y)\in SQT=ST_0.$ For the opposite inclusion, it suffices to note that $SQ\subseteq S$. In fact,  if $(x,y)\in SQ$ then $(x,Qx)\in Q$ and $(Qx,y)\in S.$ But, $((I-Q)x,0)\in S$, so $(x,y)\in S.$ The proof is complete.

\end{obs}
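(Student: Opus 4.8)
The plan is to verify separately the two assertions of the remark: that $T_0:=QT$ is single-valued, and that $ST_0=ST=R$. The only external inputs I would use are the existence (from \cite[Proposition~2]{Sando}) of some $T\in\lr(\X,\Z)$ with $R=ST$, arranged so that $\dom T=\dom R$, and the algebraic splitting $\ran T\cap\dom S=\St\,\dotplus\,(\mul T\cap\dom S)$ produced by Zorn's lemma; $Q$ is then the projection with $\ran Q=\St$ and $\ker Q=\mul T\cap\dom S$.

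For single-valuedness I would take $(0,y)\in T_0=QT$ and choose $z$ with $(0,z)\in T$ and $(z,y)\in Q$: the first membership gives $z\in\mul T$ and the second gives $z\in\dom Q=\ran T\cap\dom S$, so $z\in\mul T\cap\dom S=\ker Q$ and hence $y=Qz=0$. Thus $\mul T_0=\{0\}$, i.e.\ $T_0\in\op(\X,\Z)$.

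The identity $R=ST_0$ amounts to $ST=SQT$. The step I expect to be the only real obstacle is the inclusion $\ker Q\subseteq\ker S$, and this is precisely where the hypothesis $\mul R=\mul S$ enters: if $y\in\ker Q=\mul T\cap\dom S$ then $(0,y)\in T$ and $(y,w)\in S$ for some $w$, so $(0,w)\in ST=R$, whence $w\in\mul R=\mul S$, $(0,w)\in S$, and therefore $(y,0)=(y,w)-(0,w)\in S$. Granting this, the inclusion $SQ\subseteq S$ is immediate (for $(x,y)\in SQ$ one has $(x,Qx)\in Q$, $(Qx,y)\in S$, and $(x-Qx,0)\in S$ since $x-Qx\in\ker Q\subseteq\ker S$, hence $(x,y)\in S$), which yields $SQT\subseteq ST$; conversely, given $(x,y)\in ST$ I would pick $w\in\ran T\cap\dom S=\dom Q$ with $(x,w)\in T$ and $(w,y)\in S$, write $w=Qw+(w-Qw)$ with $w-Qw\in\ker Q\subseteq\ker S$ so that $(Qw,y)=(w,y)-(w-Qw,0)\in S$, and conclude $(x,y)\in SQT$ from $(x,w)\in T$, $(w,Qw)\in Q$, $(Qw,y)\in S$. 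Putting the pieces together, $R=ST=SQT=ST_0$ with $T_0$ an operator, which is the claim; apart from the inclusion $\ker Q\subseteq\ker S$, everything is routine bookkeeping with the definitions of product and projection.
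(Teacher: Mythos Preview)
Your proposal is correct and follows essentially the same approach as the paper's own argument: you verify single-valuedness of $T_0$ via $\mul T\cap\dom Q=\ker Q$, establish the key inclusion $\ker Q\subseteq\ker S$ using $\mul R=\mul S$, and then prove $ST=SQT$ by the same two-sided inclusion (splitting $w=Qw+(w-Qw)$ for one direction, and deducing $SQ\subseteq S$ from $\ker Q\subseteq\ker S$ for the other). The only difference is the order in which you present the two inclusions, which is immaterial.
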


Our first goal in this section is to characterize those linear relations $R$ and $S$ for which  there exists $Q \in \cQ$ such that $R = SQ$. To this end, we begin by studying the operator case.

\begin{prop}\label{SolDQ}
Let $R,S\in \op(\X).$ 
There exists $Q\in\cQ$  such that $R=SQ$ if and only if $\dom R = \ker(S-R)+\ker R.$
\end{prop}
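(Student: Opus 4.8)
The plan is to prove both implications directly, using the description of projections as $Q=P_{\St\pl\N}$ with $\dom Q=\St\dotplus\N$ and $\ker Q=\N$, and the fact that for such $Q$ one has $R=SQ$ precisely when the two operators agree on $\dom Q$ in a suitable sense. For the necessity direction, suppose $R=SQ$ with $Q=P_{\St\pl\N}\in\cQ$. Then $\dom R=\dom SQ$, and since $Q$ is a projection with $\dom Q=\St\dotplus\N$ and $\ran Q=\St$, a computation (or Proposition \ref{productproj} applied with $T=S$, noting $\ran Q=\St\subseteq\dom S$ is not automatic, so one works directly) gives $\dom R=\ker Q+\St\cap\dom S=\N+\St\cap\dom S$. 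More to the point, for $x\in\dom R$ write $x=s+n$ with $s\in\St$, $n\in\N$; then $Qx=s$, so $R(x)=S(s)=S(x)-S(n)$; hence $x-s=n\in\ker Q$ and I want to show $n\in\ker R$ as well as $s\in\ker(S-R)$. Since $R(n)=S(Qn)=S(0)=0$ (as $n\in\N=\ker Q$, so $Qn=0$), indeed $n\in\ker R$. And $S(s)=R(s)$: because $Qs=s$ gives $R(s)=S(Qs)=S(s)$, so $s\in\ker(S-R)$. Therefore $x=s+n\in\ker(S-R)+\ker R$, giving $\dom R\subseteq\ker(S-R)+\ker R$; the reverse inclusion holds since $\ker(S-R)\subseteq\dom R$ (as $S-R$ is defined only where $R$ is, more precisely $\dom(S-R)=\dom S\cap\dom R$, so one must be a little careful and perhaps note $\ker(S-R)\subseteq\dom R$ and $\ker R\subseteq\dom R$) — this yields the stated equality.

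For the sufficiency direction, assume $\dom R=\ker(S-R)+\ker R$. Put $\St_0:=\ker(S-R)$ and choose, via Zorn's lemma, a subspace $\St\subseteq\St_0$ with $\St_0=\St\dotplus(\St_0\cap\ker R)$; then $\dom R=\St\dotplus\ker R$. Indeed $\St+\ker R=\St_0+\ker R=\dom R$, and if $s\in\St\cap\ker R$ then $s\in\St\cap(\St_0\cap\ker R)=\{0\}$. Now let $Q:=P_{\St\pl\ker R}$, the projection with range $\St$ and kernel $\ker R$, which has $\dom Q=\St\dotplus\ker R=\dom R$. I claim $R=SQ$. Note first that $\St=\St\cap\St_0\subseteq\dom S$ since $\St_0=\ker(S-R)\subseteq\dom S$ (again being careful: $\ker(S-R)\subseteq\dom(S-R)\subseteq\dom S$), so $SQ$ is defined on all of $\dom Q=\dom R$, i.e. $\dom SQ=\dom R$, and $SQ$ is an operator because $S$ and $Q$ are. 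For $x\in\dom R$, write $x=s+n$ with $s\in\St$, $n\in\ker R$; then $SQ(x)=S(s)$. Since $s\in\St\subseteq\St_0=\ker(S-R)$, we have $S(s)=R(s)$, and $R(s)=R(s+n)-R(n)=R(x)$ since $n\in\ker R$. Hence $SQ(x)=R(x)$ for all $x\in\dom R=\dom SQ$, so $R=SQ$ by Lemma \ref{lemalr} (the domains and multivalued parts — both trivial — agree, and the graphs coincide pointwise on the common domain).

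The main obstacle I anticipate is purely bookkeeping: making sure the various subspaces interact correctly with the domains, in particular verifying $\St\subseteq\dom S$ so that $SQ$ has the right domain, and correctly handling the inclusion $\ker(S-R)\subseteq\dom R$ in the necessity direction (since $\dom(S-R)=\dom S\cap\dom R$ may be a proper subspace of $\dom R$). Once the domains are under control, both directions reduce to the elementary observation that $SQ$ acts as $s\mapsto S(s)$ on the range of $Q$ and that $S$ and $R$ agree exactly on $\ker(S-R)$; the decomposition $\dom R=\St\dotplus\ker R$ with $\St\subseteq\ker(S-R)$ is the precise algebraic condition that makes this work, and extracting such an $\St$ from the hypothesis $\dom R=\ker(S-R)+\ker R$ via Zorn's lemma is the only nontrivial construction.
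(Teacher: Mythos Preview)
Your proof is correct and follows essentially the same route as the paper's: in both directions you (and the paper) decompose $\dom R$ along $\ran Q\dotplus\ker Q$ and observe that the summands lie in $\ker(S-R)$ and $\ker R$, and for the converse you both pick $\St\subseteq\ker(S-R)$ with $\dom R=\St\dotplus\ker R$ and set $Q=P_{\St\pl\ker R}$. The only cosmetic difference is that the paper compresses the converse into the algebraic identity $R=RQ=(R-S+S)Q=(R-S)Q+SQ=SQ$, whereas you verify $SQ(x)=R(x)$ pointwise; your treatment of the domain issues (e.g.\ $\St\subseteq\dom S$) is in fact a bit more explicit than the paper's.
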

\begin{proof}
Let $Q$ be a projection such that $R=SQ$. Then, $SQ=RQ$ and so $\ran Q\subseteq \ker(S-R).$ From this, $\dom R\subseteq\dom Q=\ran Q\dotplus\ker Q\subseteq \ker(S-R)+\ker R.$ Conversely, if $\dom R = \ker(S-R)+\ker R$ then there exists a subspace $\St$ such that $\dom R=\St\dotplus\ker R$ and $\St\subseteq \ker(S-R).$ Let $Q:=P_{\St \pl \ker R}.$ Since $\ran Q \subseteq \dom S,$ then $R=RQ=(R-S+S)Q=(R-S)Q+SQ=SQ.$
\end{proof}

\begin{thm}\label{solQrl}
Let $R,S\in \lr(\X)$. The following are equivalent:
\begin{enumerate}
\item There exists $Q\in\cQ$  such that $R=SQ$.
\item  $\dom R = \{x: S(x)=R(x)\}+\ker R$ and $\mul R=\mul S.$
\end{enumerate}
\end{thm}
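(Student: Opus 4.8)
The plan is to reduce Theorem \ref{solQrl} to the operator case already handled in Proposition \ref{SolDQ}, using linear selections as the bridge, exactly in the spirit of the proof of Theorem \ref{Douglasop}. The implication $i)\Rightarrow ii)$ should be the easy direction: if $R=SQ$ with $Q\in\cQ$, then $\mul R = S(\mul Q) = S(\{0\}) = \mul S$, and for the domain identity one argues that $\ran Q \subseteq \{x : S(x)=R(x)\}$ (since $Qx = x$ on $\ran Q$ forces $R(x) = SQ(x) = S(x)$), whence $\dom R \subseteq \dom Q = \ran Q \dotplus \ker Q \subseteq \{x : S(x)=R(x)\} + \ker R$, the last inclusion because $\ker Q \subseteq \ker SQ = \ker R$; the reverse containment $\dom R \supseteq \cdots$ is automatic since both summands lie in $\dom R$ (note $\{x:S(x)=R(x)\}\subseteq \dom R$).

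For $ii)\Rightarrow i)$, I would first replace $R$ and $S$ by linear selections. Since $\mul R = \mul S$, choose subspaces $\St,\T$ with $\ran R = \St \dotplus \mul R$ and $\ran S = \T \dotplus \St \dotplus \mul S$ — wait, here we do not know $\ran R \subseteq \ran S$, so instead simply take any projection $Q_1$ with $\dom Q_1 = \ran R$, $\ker Q_1 = \mul R$ and any projection $Q_2$ with $\dom Q_2 = \ran S$, $\ker Q_2 = \mul S$, and set $R_0 := Q_1 R$, $S_0 := Q_2 S$, linear selections by Proposition \ref{seleccion}, with $\ker R_0 = \ker R$, $\ker S_0 = \ker S$, $\dom R_0 = \dom R$, $\dom S_0 = \dom S$. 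The key point, to be verified, is that the hypothesis translates to a statement about $R_0, S_0$: I expect $\{x : S_0(x) = R_0(x)\} \supseteq \{x : S(x) = R(x)\}$ when the two projections are chosen compatibly, i.e. when $Q_1$ and $Q_2$ agree on $\ran R \cap \ran S$ and on $\St$ — this requires choosing $\St$ inside $\ran R \cap \ran S$ augmented appropriately, and here the equality $\mul R = \mul S$ is essential. Actually the cleanest route: since we do not a priori control $\ran R$ versus $\ran S$, I would instead argue directly. If $S(x) = R(x)$ for $x$ in the relevant subspace, pick the selections so that on that common domain $S_0$ and $R_0$ take the same single value (possible because $S_0$ is obtained by projecting $\ran S$ along $\mul S = \mul R$, and $R_0$ by projecting $\ran R$ along $\mul R$, and $S(x)=R(x)$ means $S_0(x)$ and $R_0(x)$ differ by an element of $\mul R \cap (\ran R_0 + \ran S_0)$, which can be arranged to be $\{0\}$ by a careful choice of complements via Zorn). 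Then $\dom R_0 = \{x : S_0(x) = R_0(x)\} + \ker R_0$, so by Proposition \ref{SolDQ} there is a projection $Q$ with $R_0 = S_0 Q$.

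Finally I would lift back: $R = R_0 \,\hat{\dotplus}\, (\{0\}\times \mul R) = S_0 Q \,\hat{\dotplus}\, (\{0\}\times \mul S) = S_0 Q \,\hat{\dotplus}\, (\ker Q \times \mul S) = (S_0 \,\hat{\dotplus}\, (\{0\}\times\mul S))Q = SQ$, using that $\ker Q \supseteq \ker S_0 Q \supseteq \ldots$ — more precisely $\ker(S_0 Q) \supseteq \ker Q$ is false in general, so I would instead use Lemma \ref{sumasubs} and the fact that $\dom Q \subseteq \ran Q \dotplus \ker Q$ together with $\ker Q \subseteq \dom R_0 \subseteq \ran S_0$ or an analogous containment to justify absorbing the $\ker Q \times \mul S$ term, exactly as done in the last display of the proof of Theorem \ref{Douglasop}.

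\textbf{Main obstacle.} The delicate step is the translation of the hypothesis $\dom R = \{x : S(x)=R(x)\} + \ker R$ into the corresponding operator statement $\dom R_0 = \ker(S_0 - R_0) + \ker R_0$ for appropriately chosen linear selections $R_0, S_0$; this is where the equality $\mul R = \mul S$ (as opposed to mere inclusion) does the real work, via equation \eqref{nucleoSR}-type reasoning, and where one must invoke Zorn's lemma to select algebraic complements making $\ran R_0$, $\ran S_0$ and the multivalued parts interact correctly so that $\{x : S(x)=R(x)\}$ coincides with $\ker(S_0 - R_0)$. Everything else is bookkeeping with Lemmas \ref{sumasubs} and \ref{lemalr} and Proposition \ref{seleccion}.
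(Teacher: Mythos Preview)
Your direct argument for $i)\Rightarrow ii)$ is correct and in fact cleaner than the paper's, which routes even this direction through linear selections. One small polish: rather than claiming $\ran Q\subseteq\{x:S(x)=R(x)\}$ outright, for $x\in\dom R$ write $x=m+n$ with $m\in\ran Q$, $n\in\ker Q$, and observe that $m=Qx$ lies in $\dom S$ because $x\in\dom SQ$; then $(m,m)\in Q$ gives $R(m)=S(m)$, so $m\in\{x:S(x)=R(x)\}$.

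The genuine gap is in $ii)\Rightarrow i)$, and it is exactly the spot you flag as the main obstacle. You write ``here we do not know $\ran R\subseteq\ran S$'' and then launch into a delicate attempt to align arbitrary selections via Zorn. But $\ran R\subseteq\ran S$ \emph{does} follow directly from $ii)$, and this is the missing idea: if $(x,y)\in R$, write $x=x_1+x_2$ with $S(x_1)=R(x_1)$ and $x_2\in\ker R$; then $(x_1,y)=(x,y)-(x_2,0)\in R$, so $y\in R(x_1)=S(x_1)\subseteq\ran S$. Once you have $\ran R\subseteq\ran S$ together with $\mul R=\mul S$, the proof of Theorem \ref{Douglasop} hands you selections $R_0,S_0$ with $\ran R_0\subseteq\ran S_0$ and, by \eqref{nucleoSR}, $\{x:S(x)=R(x)\}=\ker(S_0-R_0)$ on the nose. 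No further complement-juggling is needed; the hypothesis becomes $\dom R_0=\ker(S_0-R_0)+\ker R_0$ and Proposition \ref{SolDQ} applies immediately.

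Finally, you second-guess yourself on the lifting step, asserting that ``$\ker(S_0Q)\supseteq\ker Q$ is false in general''. It is true: if $x\in\ker Q$ then $(x,0)\in Q$ and $(0,0)\in S_0$, so $(x,0)\in S_0Q$. Hence $S_0Q\,\hat{+}\,(\ker Q\times\mul S)=S_0Q\,\hat{+}\,(\{0\}\times\mul S)$ by the absorption rule (if $\N\subseteq\ker T$ then $T\,\hat{+}\,(\N\times\St)=T\,\hat{+}\,(\{0\}\times\St)$), and the chain $R=S_0Q\,\hat{\dotplus}\,(\{0\}\times\mul S)=(S_0\,\hat{\dotplus}\,(\{0\}\times\mul S))Q=SQ$ goes through exactly as in the proof of Theorem \ref{Douglasop}.
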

\begin{proof} 

Assume that there exists $Q\in\cQ$ such that $R=SQ.$ Then, $\ran R\subseteq \ran S$ and $\mul S=\mul R.$ Therefore, by the proof of Theorem \ref{Douglasop}, there exist operators $R_0,S_0$ such that $R=R_0\hat{\dotplus}(\{0\}\times\mul R)$  and $S=S_0\hat{\dotplus}(\{0\}\times\mul S)$
with $\ran R_0\subseteq \ran S_0$, and  $\{x: S(x)=R(x)\}=\ker(S_0-R_0)$. 

Consequently, $R_0\hat{\dotplus}(\{0\}\times\mul R)=(S_0\hat{\dotplus}(\{0\}\times\mul S))Q=S_0Q\hat{\dotplus}(\{0\}\times\mul S).$ Now, since $\mul R=\mul S$ and $\ran R_0\subseteq \ran S_0$, we get that $R_0 \subseteq S_0 Q.$ In fact, let $(x,y)\in R_0$ then $y=S_0Qx+s$ for some $s \in \mul S.$ Then $y-S_0Qx=s \in \ran S_0 \cap \mul S=\{0\}.$ So that $(x,y) \in S_0Q.$
 Hence $R_0=S_0Q.$ Thus, by Proposition \ref{SolDQ}, $\dom R=\dom R_0= \ker(R_0-S_0)+\ker R_0=\ker(R_0-S_0)+\ker R=\{x: S(x)=R(x)\}+\ker R.$ 

Conversely, assume that $ii)$ holds. We claim that $\ran R\subseteq \ran S.$ Indeed, if $y\in\ran R$ then $(x,y)\in R$ for some $x=x_1+x_2$ with $x_1\in \{x: S(x)=R(x)\}$ and $x_2\in \ker R. $ Thus, $(x_1,y)=(x-x_2,y)\in R$, i.e., $y\in R(x_1)=S(x_1)$ and so $y\in\ran S.$ By the proof of Theorem \ref{Douglasop}, there exist operators $R_0,S_0$ such that $R=R_0\hat{\dotplus}(\{0\}\times\mul R)$  and $S=S_0\hat{\dotplus}(\{0\}\times\mul S),$ with $\ran R_0\subseteq \ran S_0$ and $\{x: S(x)=R(x)\}=\ker(S_0-R_0)$. Thus,  $\dom R_0=\ker(S_0-R_0)+\ker R_0.$ So that, by Proposition \ref{SolDQ}, there exists a projection $Q$ such that $R_0=S_0Q.$ Therefore, as $\mul R=\mul S$, $R=R_0\hat{\dotplus}(\{0\}\times\mul R)=S_0Q\hat{\dotplus}(\{0\}\times\mul S)=(S_0\hat{\dotplus}(\{0\}\times\mul S))Q=SQ.$
\end{proof}

Our second goal is to study the reverse factorization $R = QS$, where $R$ and $S$ are linear relations and $Q$ is a projection. This problem requires a deeper analysis. We present an initial approach based on linear selections.

\begin{prop}\label{SolIQ}
	Let $R,S\in \op(\X).$ The following are equivalent:

	\begin{enumerate}
	\item There exists $Q\in\cQ$  such that $R=QS$. 
	\item $Q_1:=Q_{\ran R \pl \ran(S-R)}$ is well-defined and $R=Q_1S.$ 
	\item  $\ran(S-R)\cap \ran R=\{0\}$ and $\dom R=S^{-1}(\ran R\dotplus\ran(S-R))$.
	\end{enumerate}
\end{prop}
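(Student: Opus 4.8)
The plan is to prove the chain $i)\Rightarrow iii)\Rightarrow ii)\Rightarrow i)$, exploiting that everything in sight is an operator so that intersections, sums and preimages behave cleanly. The whole scheme mirrors Proposition \ref{SolDQ}, but ``on the other side'': there $R=SQ$ forced $\ran Q\subseteq\ker(S-R)$, here $R=QS$ will force $\ran S\subseteq$ a controlled subspace and will pin down $Q$ on $\ran S$.

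\smallskip
\textbf{$i)\Rightarrow iii)$.} Suppose $R=QS$ with $Q=P_{\ran Q\pl\ker Q}\in\cQ$. First, since $Q$ is idempotent, $QR=Q^2S=QS=R$, so $R$ agrees with the identity on $\ran R$; applied to $S-R$ this is less direct, so instead I argue: for $x\in\dom R$, $(Sx-Rx)\in\ker Q$, because $Q(Sx)=Rx$ and $Q$ is idempotent gives $Q(Sx-Rx)=Q(Sx)-Q(Rx)=Rx-Q(Rx)$; and $Q(Rx)=Rx$ since $Rx\in\ran Q$ (as $\ran R\subseteq\ran Q$). Hence $\ran(S-R)\subseteq\ker Q$ while $\ran R\subseteq\ran Q$, and $\ran Q\cap\ker Q=\{0\}$ yields $\ran(S-R)\cap\ran R=\{0\}$. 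For the domain identity: if $x\in\dom R$ then $Sx\in\dom R$'s image under $S$ and $Sx=Rx+(Sx-Rx)\in\ran R\dotplus\ran(S-R)$, so $x\in S^{-1}(\ran R\dotplus\ran(S-R))$; conversely if $Sx=r+t$ with $r\in\ran R$, $t\in\ran(S-R)$, apply $Q$: $Q(Sx)=Q(r)=r$ since $r\in\ran R\subseteq\ran Q$ and $t\in\ran(S-R)\subseteq\ker Q$, so $Rx=r$ exists, i.e. $x\in\dom R$ (using $\dom R\subseteq\dom S$, which holds because $R=QS$).

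\smallskip
\textbf{$iii)\Rightarrow ii)$.} From $\ran(S-R)\cap\ran R=\{0\}$ the projection $Q_1:=Q_{\ran R\pl\ran(S-R)}$ is well-defined, with $\ran Q_1=\ran R$, $\ker Q_1=\ran(S-R)$ and $\dom Q_1=\ran R\dotplus\ran(S-R)$. I must check $R=Q_1S$. Note first that $\dom(Q_1S)=S^{-1}(\dom Q_1)=S^{-1}(\ran R\dotplus\ran(S-R))=\dom R$ by hypothesis. For $x$ in this common domain, $Sx\in\dom Q_1$, write $Sx=r+t$ (uniquely), $r\in\ran R$, $t\in\ran(S-R)$; then $Q_1(Sx)=r$. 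On the other hand $Sx-Rx\in\ran(S-R)$, and $Rx\in\ran R$, so $Sx=Rx+(Sx-Rx)$ is exactly the $Q_1$-decomposition; by uniqueness $r=Rx$, hence $Q_1(Sx)=Rx$. Thus $Q_1S=R$ as operators with equal domains.

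\smallskip
\textbf{$ii)\Rightarrow i)$} is immediate, taking $Q=Q_1$.

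\smallskip
\textbf{Main obstacle.} The delicate point is the direction $i)\Rightarrow iii)$, specifically establishing $\ran(S-R)\subseteq\ker Q$ and the reverse containment $S^{-1}(\ran R\dotplus\ran(S-R))\subseteq\dom R$. The first needs the observation $\ran R\subseteq\ran Q$ together with idempotency to control $Q$ on $Rx$; the second needs that $\dom R\subseteq\dom S$ (so that ``$Sx$ exists'' is meaningful) and that applying $Q$ to a decomposition $Sx=r+t$ recovers $r\in\ran R$, which again leans on $\ran R\subseteq\ran Q$ and $\ran(S-R)\subseteq\ker Q$. Keeping straight which containments are available at each moment — and that directness of $\dom R\subseteq\dom S$ from $R=QS$ — is where care is required; the rest is the same bookkeeping as in Proposition \ref{SolDQ}.
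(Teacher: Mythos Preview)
Your proof is correct and uses essentially the same ideas as the paper's: the containments $\ran R\subseteq\ran Q$ and $\ran(S-R)\subseteq\ker Q$, the decomposition $Sx=Rx+(Sx-Rx)$, and the computation $\dom(Q_1S)=S^{-1}(\dom Q_1)$. The only difference is organizational---you run the cycle $i)\Rightarrow iii)\Rightarrow ii)\Rightarrow i)$, whereas the paper proves the two biconditionals $i)\Leftrightarrow ii)$ and $ii)\Leftrightarrow iii)$---but the substance is the same.
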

\begin{proof}
$i)\Leftrightarrow ii)$:	Let $Q\in\cQ$  such that $R=QS$. Then, $QR=QS$ and so $\ran(S-R) \cap \ran R\subseteq \ker Q \cap \ran Q=\{0\}.$ Thus, $Q_1:=Q_{\ran R \pl \ran(S-R)}$ is well-defined. Moreover, since $\dom Q_1=\ran R+\ran(S-R)\subseteq \ran Q+\ker Q=\dom Q,$ then $\dom Q_1S\subseteq\dom QS=\dom R.$ Let us see that $R \subseteq Q_1S.$ In fact, if $x \in \dom R \subseteq \dom S$ then $(R-S)x \in \ker Q_1$ so that $Rx=Q_1Rx=Q_1Sx.$ Now, since $\dom Q_1S\subseteq \dom R,$ we get indeed that $R=Q_1S.$ The converse is straightforward.

$ii)\Leftrightarrow iii)$: As $Q_1$ is well-defined then $\ran(S-R)\cap \ran R=\{0\}$. Furthermore, from $R=Q_1S$ we get that $\dom R=\dom(Q_1 S)=S^{-1}(\dom Q_1)=S^{-1}(\ran R\dotplus\ran(S-R)).$
Conversely, as $\ran(S-R)\cap \ran R=\{0\}$ then $Q_1:=Q_{\ran R \pl \ran(S-R)}$ is well-defined. We claim that $R=Q_1S.$ In fact, if $x \in \dom R \subseteq \dom S$ then $(R-S)x \in \ker Q_1$ so that $Rx=Q_1Rx=Q_1Sx.$ But, $\dom (Q_1S)=S^{-1}(\ran R+\ran(S-R))=\dom R.$ Therefore $R=Q_1S.$
\end{proof}

\begin{thm}\label{SolQ2}
	Let $R,S\in \lr(\X).$ The following are equivalent:

	\begin{enumerate}
	\item There exists $Q\in\cQ$  such that $R=QS$ and $Q(\mul S)=\mul R.$
	\item There exists $Q\in\cQ$  such that $R_0=QS_0$ for some $R_0\in\ls(R)$ and $S_0\in\ls(S)$, and $Q(\mul S)=\mul R.$
	\end{enumerate}
\end{thm}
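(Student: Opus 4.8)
The plan is to pass back and forth between the identities $R=QS$ and $R_0=QS_0$ by building linear selections adapted to a given projection, using Lemmas~\ref{sumasubs} and~\ref{lemalr} throughout.

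\emph{From $i)$ to $ii)$.} Assume $R=QS$ with $Q\in\cQ$; then $\mul R=\mul(QS)=Q(\mul S)$ automatically. The key step is to choose $S_0\in\ls(S)$ with $S_0(\dom R)\subseteq\dom Q$. Since $x\in\dom R=\dom(QS)$ forces $S(x)\cap\dom Q\neq\emptyset$, the relation $S\cap(\dom R\times\dom Q)$ has domain $\dom R$; I would take any linear selection of it and extend it, through a linear selection of $S$ on an algebraic complement of $\dom R$ in $\dom S$, to some $S_0\in\ls(S)$. Then $R_0:=QS_0$ is an operator with $R_0\subseteq QS=R$, and $\dom R_0=\dom R$: the inclusion $S_0(\dom R)\subseteq\dom Q$ gives $\dom R\subseteq\dom R_0$, while for $x\in\dom S\setminus\dom R$ no vector of $S(x)$---so not $S_0(x)$---lies in $\dom Q$, whence $x\notin\dom R_0$. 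Lemma~\ref{lemalr} then yields $R=R_0\,\hat{\dotplus}\,(\{0\}\times\mul R)$, so $R_0\in\ls(R)$, and the same $Q$ witnesses $ii)$.

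\emph{From $ii)$ to $i)$.} Assume $R_0=QS_0$ with $R_0\in\ls(R)$, $S_0\in\ls(S)$, $Q\in\cQ$ and $Q(\mul S)=\mul R$. From $S=S_0\,\hat{\dotplus}\,(\{0\}\times\mul S)$, $\{0\}\times\mul R=Q(\{0\}\times\mul S)$ and Lemma~\ref{sumasubs} one obtains $R=R_0\,\hat{\dotplus}\,(\{0\}\times\mul R)\subseteq QS$, together with $\mul(QS)=Q(\mul S)=\mul R$. By Lemma~\ref{lemalr} it therefore suffices to produce a projection $Q'$ with $R\subseteq Q'S$, $\mul(Q'S)=\mul R$ and $\dom(Q'S)=\dom R$ (then $Q'(\mul S)=\mul R$ is automatic, and $i)$ follows). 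Applying Proposition~\ref{SolIQ} to the operator identity $R_0=QS_0$ pins down $\ran(S_0-R_0)$ and its disjointness from $\ran R_0$; the candidate is the projection $Q'$ with $\ran Q'=\ran R$ and $\ker Q'$ an algebraic complement of $\ran R$ inside a subspace containing $\ran(S_0-R_0)$ and $\mul S$, chosen via Zorn's lemma so that $\ran(S_0-R_0)\subseteq\ker Q'$, $\mul S\subseteq\dom Q'$, $Q'$ still carries $\mul S$ onto $\mul R$, and $\dom Q'\cap\ran S_0=S_0(\dom R)$. One then checks $R=Q'S$ by comparing domains, ranges and multivalued parts and invoking Lemma~\ref{lemalr}.

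The main obstacle is exactly this construction of $Q'$: its domain must be \emph{large} enough to contain all of $\mul S$, so that $\mul(Q'S)=Q'(\mul S)$ is precisely $\mul R$ with nothing of the multivalued part lost, yet \emph{small} enough that $\dom(Q'S)$ falls back to $\dom R$ rather than growing past it---concretely, $\dom Q'$ must not meet $\ran S_0$ outside $S_0(\dom R)$. Reconciling these competing requirements is where the equality $Q(\mul S)=\mul R$ (rather than the mere inclusion $Q(\mul S)\subseteq\mul R$) enters, through a careful Zorn's-lemma choice of the complement $\ker Q'$ that keeps the $\ran S_0$-directions separated from the $\mul S$-directions.
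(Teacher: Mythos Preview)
The paper's proof is a two--line computation that keeps the \emph{same} projection $Q$ in both directions. For $i)\Rightarrow ii)$ it takes an arbitrary $S_0\in\ls(S)$ and writes
\[
R=QS=Q\bigl(S_0\,\hat\dotplus\,(\{0\}\times\mul S)\bigr)=QS_0\,\hat\dotplus\,(\{0\}\times Q(\mul S))=QS_0\,\hat\dotplus\,(\{0\}\times\mul R),
\]
whence $R_0:=QS_0\in\ls(R)$; for $ii)\Rightarrow i)$ it simply reads the same chain backwards. No new projection is ever built.

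Your $i)\Rightarrow ii)$ is correct but more elaborate than the paper's: you tailor $S_0$ so that $S_0(\dom R)\subseteq\dom Q$ before setting $R_0:=QS_0$, whereas the paper lets $S_0$ be arbitrary and relies directly on the $\hat\dotplus$--distributivity from Lemma~\ref{sumasubs}. What your extra care buys is explicit control of $\dom(QS_0)$ without having to check the side condition in Lemma~\ref{sumasubs}.

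Your $ii)\Rightarrow i)$, however, is not a proof. After obtaining $R\subseteq QS$ and $\mul(QS)=\mul R$ you abandon $Q$ and try to manufacture a different projection $Q'$ subject to a long list of simultaneous constraints: range $\ran R$; kernel containing $\ran(S_0-R_0)$; domain containing all of $\mul S$; domain meeting $\ran S_0$ only in $S_0(\dom R)$; and $Q'(\mul S)=\mul R$. You never exhibit such a $Q'$ nor argue that one exists---the invocation of Zorn's lemma is a placeholder, not an argument, since you give no reason why the ``enlarge $\dom Q'$ to swallow $\mul S$'' and ``shrink $\dom Q'$ to avoid $\ran S_0\setminus S_0(\dom R)$'' requirements are compatible. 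This is precisely the obstacle you yourself flag in your final paragraph, and it remains unresolved. By contrast, the paper never changes $Q$: it just runs the displayed identity in reverse, so the whole $Q'$ apparatus is absent from the intended argument.
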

\begin{proof}
$i)\Rightarrow ii)$: Let $S=S_0\hat{\dotplus}(\{0\}\times\mul S)$ for some $S_0\in\ls(S).$ Then, $R=QS=Q(S_0\hat{\dotplus}(\{0\}\times\mul S))=QS_0\hat{\dotplus}(\{0\}\times Q(\mul S))=QS_0\hat{\dotplus}(\{0\}\times \mul R).$ Thus, $QS_0\in \ls(R)$. That is, $R_0:=QS_0$ for some $R_0\in\ls(R)$.

$ii)\Rightarrow i)$: Assume that $ii)$ holds. Then, $R=R_0\hat{\dotplus}(\{0\}\times\mul R)=QS_0\hat{\dotplus}(\{0\}\times Q(\mul S))=Q(S_0\hat{\dotplus}(\{0\}\times\mul S))=QS.$
\end{proof}

\section{The product of multivalued projections}

Our aim in this section is to characterize the set $\Mp^2 := \{ PQ : P, Q \in \Mp(\X) \}$. 
 We refer to \cite{ACM}, \cite{B}, \cite{Cor}, \cite{Rad}, and the references therein, for results about the product of projections.

We begin with a technical result.

\begin{lema} \label{propmp2} Let $T \in \Mp^2.$ Then there exist $P, Q \in \Mp(\X)$ such that $T=PQ$ with $\ran P=\ran T,$ $\mul P=\mul T,$ $\dom Q=\dom T$ and $\ker Q=\ker T.$  
\end{lema}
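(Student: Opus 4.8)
Let $T\in\Mp^2$, so $T=P'Q'$ for some $P',Q'\in\Mp(\X)$. The plan is to replace $P'$ and $Q'$ by suitably ``trimmed'' multivalued projections that still multiply to $T$ but whose range/multivalued part (resp. domain/kernel) match those of $T$. The natural candidates are $P:=P_{\ran T,\,\mul T + \ker P'}$ and $Q:=P_{\dom T\cap\dom Q',\,\ker T}$; I expect that $P=P_{\ran T,\,\ker P'}\,\hat{+}\,(\{0\}\times\mul T)$ will be the cleaner description of $P$, using Proposition \ref{repidemp}, since $\mul T\subseteq\mul P'$ will hold. First I would record the elementary containments between the subspaces attached to $T$ and those attached to $P',Q'$: namely $\ran T\subseteq\ran P'$, $\mul T=\mul P'$ (the multivalued part of a product $P'Q'$ is governed by $P'$ and $\mul Q'$, and here $\ran Q'\subseteq\dom P'$ so the Corollary after Proposition \ref{productproj} applies), $\ker T\supseteq\ker Q'$, and $\dom T\subseteq\dom Q'$. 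These follow directly from the results of Section 3.

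Next I would verify that the trimmed relations are genuine multivalued projections with the asserted domain, range, kernel and multivalued part. For $Q$ this is immediate from the Cross--Labrousse parametrization $P_{\M,\N}$: it is a multivalued projection with range $\dom T\cap\dom Q'$, kernel $\ker T$, domain $(\dom T\cap\dom Q')+\ker T$, and one must check this domain equals $\dom T$ — which should follow because $\dom T\subseteq\dom Q'$ and $\ker Q'\subseteq\ker T$, so every $x\in\dom T$ decomposes along $\dom Q'=\ran Q'\dotplus\ker Q'$ with the $\ran Q'$-component landing in $\dom T\cap\dom Q'$ modulo $\ker T$. For $P$ I would use the form $P=P_{\ran T,\ker P'}\,\hat{+}\,(\{0\}\times\mul T)$; by \eqref{selecIdemp} (or directly) this is an idempotent, its range is $\ran T+\mul T=\ran T$ (since $\mul T\subseteq\ran T$ because $T$ is itself a product of relations with invariant domain — more carefully, $\mul T\subseteq\mul P'\subseteq\ran P'$ and one shows $\mul T\subseteq\ran T$), its multivalued part is $\mul T$, and $\ran P\subseteq\dom P$ so it is a multivalued projection.

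The heart of the argument, and the step I expect to be the main obstacle, is proving $PQ=T$. By Lemma \ref{lemalr} it suffices to show $PQ\subseteq T$, $\dom T\subseteq\dom PQ$ and $\mul T\subseteq\mul PQ$. For the inclusion $PQ\subseteq T$: take $(x,y)\in PQ$, so $(x,w)\in Q$ and $(w,y)\in P$ for some $w$; then $w\in\ran Q=\dom T\cap\dom Q'\subseteq\dom Q'$ and $x-$(its $\ran Q'$-part$)\in\ker T$, and I would feed this through $T=P'Q'$ using that $w$ has a $P'$-image related to its $T$-image, comparing $P$ with the action of $P'$ on $\ran Q'$. Here the key algebraic facts are $P\,\text{(restricted to }\ran Q')$ agrees with $T$ up to $\ker Q'$ and $\ker T$, which is where the chosen kernels $\ker P'$ and $\ker T$ enter. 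The domain and multivalued-part conditions are then bookkeeping: $\dom PQ=\dom Q=\dom T$ should follow from $\ran Q=\dom T\cap\dom Q'\subseteq\dom P'\ \cap\ \dom P$ via Proposition \ref{productproj} applied to $PQ$, and $\mul PQ\supseteq\mul P=\mul T$. I would close by invoking Lemma \ref{lemalr} to upgrade the inclusion to equality.

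\textbf{Remark on the obstacle.} The delicate point is that trimming $P'$ to $P$ changes its kernel (we keep only $\ker P'$, discarding the part of $\mul P'$ that was folded into it) and trimming $Q'$ to $Q$ shrinks its range and domain; one must check these two modifications are ``compatible'' across the product, i.e. that what $Q$ loses on the range side is exactly what $P$ no longer needs on the domain side. I would handle this by working with the explicit elements $(m+n,m)$ in the Cross--Labrousse normal form for $P',Q'$ and tracking them through the composition, rather than manipulating the subspace identities abstractly.
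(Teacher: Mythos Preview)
Your candidates for $P$ and $Q$ are, up to a slip, the same as the paper's, but several of the ``elementary containments'' you record are wrong and the key step is left vague. First, $\mul T=\mul P'$ is false: in general only $\mul P'\subseteq \mul T$ holds (take $P'=P_{\mathbb{K}e_1,\{0\}}$ and $Q'=P_{\mathbb{K}^2,\mathbb{K}e_1}$ on $\X=\mathbb{K}^2$; then $\mul P'=\{0\}$ but $\mul T=\mathbb{K}e_1$). Likewise $\ran Q'\subseteq\dom P'$ need not hold, and $\dom Q'=\ran Q'+\ker Q'$ is a sum, not a direct sum, since $Q'\in\Mp$ may have $\mul Q'\neq\{0\}$. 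More seriously, your $Q:=P_{\dom T\cap\dom Q',\ker T}$ has range $\dom T\cap\dom Q'=\dom T$ (because $\dom T\subseteq\dom Q'$), and with that range the product $PQ$ does \emph{not} equal $T$ in general: elements of $\dom T$ need not lie in $\dom P=\ran T+\ker P'$. Your subsequent reasoning (``the $\ran Q'$-component lands in $\dom T\cap\ldots$'') strongly suggests you meant $Q:=P_{\dom T\cap\ran Q',\ker T}$, which is exactly the paper's choice.

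The real gap is the proof that $PQ=T$. Your plan to show $PQ\subseteq T$ by direct element chasing and then match domain and multivalued part is workable in principle, but you never explain how the image of $w\in\ran Q\subseteq\ran Q'$ under $P$ is forced to lie in $T(x)$; this is precisely the step where the choice $\ker P=\ker P'+\mul T$ must interact with the factorization $T=P'Q'$, and it is not obvious. The paper avoids this by a clean two-step argument: first show $T=PQ'$, using that $PP'=P$ (Proposition~\ref{idemp}: $\ker P'\subseteq\ker P$ and $\dom P\subseteq\dom P'$) and that $PT=T$ (Lemma~\ref{lem1}: $\ran T\cap(\ker P'+\mul T)\subseteq\mul T$), so $PQ'=PP'Q'=PT=T$. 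Then, from this new factorization $T=PQ'$, show $T=PQ$ using $Q'Q=Q$ (Corollary~\ref{coridemp}) and $TQ=T$ (Lemma~\ref{lem1} again). This intermediate factorization is the idea you are missing; it turns the ``compatibility across the product'' you flag as the obstacle into two independent applications of already-proved lemmas.
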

\begin{proof}  Let $T=EF$ with $E, F \in \Mp(\X)$. Then $\ran T\subseteq \ran E$ and $\mul E\subseteq \mul T.$ On one hand, let $\St:=\ker E+\mul T$ and $P:=P_{\ran T, \St}$. By Proposition \ref{idemp}, $PE=P$ because $\dom P=\ran T + \St=\ran T + \ker E \subseteq \ran E + \ker E=\dom E$ and $\ker E\subseteq \St=\ker P.$ Also, by Lemma \ref{lem1}, $PT=T$ because $\ran  T \cap \St= \mul T + \ran T \cap \ker E \subseteq \mul T + \mul E=\mul T. $ So  $$PF=PEF=PT=T.$$
Hence $\mul P\subseteq \mul T.$ But $\mul P=\ran T\cap \St=\ran T\cap(\ker E+\mul T)=\ran T\cap \ker E+\mul T\supseteq \mul T.$ Thus, $\mul P= \mul T.$ On the other hand, if $T=PF$ then $\ker F\subseteq \ker T$ and $\dom T\subseteq \dom F.$ Let $\W:=\ran F\cap \dom T$ and $Q:=P_{\W,\ker T}.$ By Corollary \ref{coridemp}, $FQ=Q$ because $\mul F=\ker F \cap \ran F \subseteq \ker T \cap \ran F=\mul Q$ and $\ran Q=\W \subseteq \ran F.$ Also, by Lemma \ref{lem1}, $TQ=T$ because $\dom T \subseteq \dom F=\ran F + \ker F \subseteq \ran F+ \ker T.$ So that $$PQ=PFQ=TQ=T.$$ Hence $\dom Q=\W+\ker T=\ran F\cap \dom T+\ker T\subseteq \dom T.$ Thus, $\dom T=\dom Q.$
\end{proof} 

In our next result we prove that every element in $\Mp^2$ can be expressed as the product of a multivalued projection, with prescribed range and multivalued part, and a projection with prescribed domain and kernel.

\begin{thm}\label{Zorn}
	
	It holds that
	$$\Mp^2=\Mp \cQ.$$
	Moreover, given  $T \in \Mp^2$ there exist $P\in \Mp$  and  $Q_0\in\cQ$ with $\ran P=\ran T,$ $\mul P=\mul T,$ $\dom Q_0=\dom T$ and $\ker Q_0=\ker T$ such that $T=PQ_0.$  
\end{thm}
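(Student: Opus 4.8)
The plan is to prove the two displayed equalities $\Mp^2 = \Mp\cQ$ by mutual inclusion, using Lemma \ref{propmp2} for the harder direction and the fact that $\cQ \subseteq \Mp$ for the easier one. The ``moreover'' part is essentially extracted from the proof of the main inclusion, so I would prove it first and then deduce the set equality.

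\smallskip
\noindent\textbf{The inclusion $\Mp^2 \subseteq \Mp\cQ$ (and the ``moreover'' statement).} Let $T \in \Mp^2$. By Lemma \ref{propmp2} there exist $P, Q \in \Mp(\X)$ with $T = PQ$, $\ran P = \ran T$, $\mul P = \mul T$, $\dom Q = \dom T$ and $\ker Q = \ker T$. The issue is that $Q$ is only a multivalued projection, not a projection, so $Q$ need not lie in $\cQ$. To fix this, I would replace $Q$ by a suitable linear selection. Concretely, write $Q = Q_0 \hat{\dotplus}(\{0\}\times \mul Q)$ for some $Q_0 \in \ls(Q)$; by Proposition \ref{seleccion} (taking the projection there to have domain $\ran Q$ and kernel $\mul Q$) we may arrange $Q_0 = P_{\St \pl \ker Q}$ for some subspace $\St$ with $\ran Q = \St \dotplus \mul Q$. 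Then $Q_0 \in \cQ$ with $\ker Q_0 = \ker Q = \ker T$ and $\dom Q_0 = \dom Q = \dom T$. It remains to check $PQ_0 = PQ$. Since $Q_0 \subseteq Q$ we get $PQ_0 \subseteq PQ = T$; and for the reverse, $PQ = P(Q_0 \hat{\dotplus}(\{0\}\times\mul Q)) = PQ_0 \hat{\dotplus}(\{0\}\times P(\mul Q))$ by Lemma \ref{sumasubs}, so it suffices to see $P(\mul Q) \subseteq \mul PQ_0$; but $\mul Q = \ran Q \cap \ker Q \subseteq \ran Q$, and since $\mul Q \subseteq \ker Q = \ker Q_0$, any $m \in \mul Q$ satisfies $(m,0)\in Q_0$, hence $P(m) \subseteq P(Q_0(0)) = PQ_0(0) = \mul PQ_0$. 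Using Lemma \ref{lemalr} together with $\dom PQ = \dom PQ_0$ (both equal $\dom T$, since $\ker Q_0 = \ker Q$) and $\mul PQ \subseteq \mul PQ_0$, we conclude $PQ = PQ_0$, so $T = PQ_0$ with $P \in \Mp$, $Q_0 \in \cQ$, and all four prescribed subspace identities hold. This proves both the ``moreover'' claim and $\Mp^2 \subseteq \Mp\cQ$.

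\smallskip
\noindent\textbf{The inclusion $\Mp\cQ \subseteq \Mp^2$.} This is immediate: every projection is in particular an idempotent relation with invariant domain, i.e.\ $\cQ \subseteq \Mp(\X)$, so any product $PQ_0$ with $P \in \Mp$ and $Q_0 \in \cQ$ lies in $\Mp\Mp = \Mp^2$.

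\smallskip
\noindent\textbf{Main obstacle.} The only genuinely delicate point is verifying $PQ_0 = PQ$ after passing from the multivalued projection $Q$ to its projection selection $Q_0$ — one must make sure that discarding $\mul Q$ does not change the product, which is exactly where $\mul Q \subseteq \ker Q$ (a consequence of $Q$ being a multivalued projection, via $\mul Q = \ran Q \cap \ker Q$ and Proposition \ref{repidemp}-type reasoning) is used, so that $(m,0) \in Q_0$ for $m \in \mul Q$ and the multivalued contribution is absorbed into $\mul PQ_0$. Everything else is bookkeeping with domains, ranges, kernels and multivalued parts, handled by Lemmas \ref{sumasubs}, \ref{lemalr}, Proposition \ref{seleccion} and Lemma \ref{propmp2}.
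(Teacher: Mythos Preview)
Your overall strategy is sound and close to the paper's: start from Lemma~\ref{propmp2} to get $T=PQ$ with the four prescribed subspaces, then replace $Q$ by a single-valued idempotent $Q_0$. However, there is a genuine gap in the step where you pass from $Q$ to an \emph{arbitrary} projection selection $Q_0$ and claim $PQ_0=PQ$.

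The problem is your assertion that $\dom PQ_0=\dom T$ ``since $\ker Q_0=\ker Q$''. This does not follow: $\dom PQ_0=Q_0^{-1}(\dom P)$ equals $\dom Q_0$ if and only if $\ran Q_0\subseteq\dom P$, and nothing in your construction guarantees this for a generic complement $\St$ of $\mul Q$ in $\ran Q$. The same issue breaks your appeal to Lemma~\ref{sumasubs}: the equality $P(Q_0\,\hat{\dotplus}\,(\{0\}\times\mul Q))=PQ_0\,\hat{\dotplus}\,(\{0\}\times P(\mul Q))$ requires $\ran Q_0\subseteq\dom P$ or $\mul Q\subseteq\dom P$, neither of which you verify. (Your argument for $P(\mul Q)\subseteq\mul PQ_0$ is also garbled: from $(m,0)\in Q_0$ you cannot infer $P(m)\subseteq P(Q_0(0))$. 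The correct reason is that $(0,m)\in Q$ and $(m,y)\in P$ force $y\in\mul T=\mul P$, hence $m\in\ker P$; but this only handles $m\in\mul Q\cap\dom P$ and again leaves the domain issue open.)

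The fix is to choose $\St$ inside $\ran Q\cap\dom P$. This is possible because $\dom PQ=\dom Q$ forces $\ran Q=(\ran Q\cap\dom P)+\mul Q$: indeed, any $x\in\dom Q$ has $(x,z)\in Q$ with $z\in\dom P$, and since $Q\in\Mp$ one gets $z\in\ran Q\cap\dom P$ with $x-z\in\ker Q$, so intersecting with $\ran Q$ gives the claim. Then take $\St$ to be a complement of $\mul Q\cap\dom P$ in $\ran Q\cap\dom P$; one checks $\St\dotplus\mul Q=\ran Q$ and $\St\subseteq\dom P$, and now your argument goes through. This is exactly what the paper does, packaged differently: it invokes Remark~\ref{operador} with the auxiliary projection $E$ having $\dom E=\ran Q\cap\dom P$ and $\ker E=\mul Q\cap\dom P$, which automatically gives $\ran Q_0\subseteq\dom P$. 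A smaller point: Proposition~\ref{seleccion} only gives that $Q_0$ is an operator; that $Q_0$ is actually idempotent comes from Proposition~\ref{op-decomp} (equation~\eqref{selecMP}), which you should cite.
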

\begin{proof}
	Let $T=PQ$ with $P,Q\in\Mp$ as in Lemma \ref{propmp2}, and let $E$ be a projection with $\dom E=\ran Q\cap \dom P$ and $\ker E=\mul Q\cap\dom P.$ Then, by Remark \ref{operador}, $Q_0:=EQ$ is an operator such that $T=PQ_0$ with $\dom Q_0=\dom T=\dom Q$ and $\ker Q_0=\ker T$. It remains to show that $Q_0$ is a projection. Notice that $Q_0\subseteq Q$. In fact, if $(x,y)\in Q_0=EQ$ then there exists $z\in\dom E$ such that $(x,z)\in Q$ and $y=Ez$. Thus, $z=y+(I-E)z$ where $(I-E)z\in\ker E\subseteq \mul Q.$ Hence, $(0,(I-E)z)\in Q$ and so $(x,y)\in Q.$ Therefore,  $Q_0^2=(EQ)Q_0\subseteq EQ^2=EQ=Q_0.$ Finally, $\ran Q_0 \subseteq \ran E \subseteq \dom E \subseteq \ran Q \subseteq \dom Q=\dom Q_0$ then $\dom Q_0^2=\dom Q_0$ and $Q_0^2=Q_0.$
\end{proof}

 \begin{cor}[\textnormal{cf.~\cite[Proposition 2.7]{ACM}}]The following conditions are equivalent:
\begin{enumerate}
\item $T\in\Mp^2;$
\item $\dom T=\{x: T(x)=P_{\ran T, \St}(x)\}+\ker T,$ for some subspace $\St$ such that $\St\cap\ran T=\mul T.$
\end{enumerate}
\end{cor}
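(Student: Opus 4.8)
The plan is to deduce the equivalence from Theorem \ref{Zorn} together with Theorem \ref{solQrl}, since the corollary is essentially the specialization of the latter to the situation $R=T$, $S=P_{\ran T,\St}$ forced by Lemma \ref{propmp2}.

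\medskip

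First I would prove $i)\Rightarrow ii)$. If $T\in\Mp^2$, then by Theorem \ref{Zorn} (or directly Lemma \ref{propmp2}) there are $P\in\Mp$ and $Q_0\in\cQ$ with $\ran P=\ran T$, $\mul P=\mul T$, $\dom Q_0=\dom T$, $\ker Q_0=\ker T$ and $T=PQ_0$. Set $\St:=\ker P$. Then $P=P_{\ran T,\St}$, and since $P\in\Mp$ we have $\mul P=\ran P\cap\ker P=\ran T\cap\St$, so $\St\cap\ran T=\mul T$ as required. Now $T=PQ_0=P_{\ran T,\St}Q_0$ is a factorization of $T$ as $SQ$ with $S=P_{\ran T,\St}$ and $Q=Q_0\in\cQ$; moreover $\mul S=\mul P=\mul T$. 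By the implication $i)\Rightarrow ii)$ of Theorem \ref{solQrl} (applied with $R=T$, $S=P_{\ran T,\St}$), we get $\dom T=\{x: S(x)=T(x)\}+\ker T$, which is exactly statement $ii)$.

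\medskip

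For $ii)\Rightarrow i)$, suppose $\St$ is a subspace with $\St\cap\ran T=\mul T$ and $\dom T=\{x: T(x)=P_{\ran T,\St}(x)\}+\ker T$. Put $S:=P_{\ran T,\St}$; then $S\in\Mp(\X)$ provided $\ran S=\ran T\subseteq\dom S=\ran T+\St$, which is automatic, so in particular $S$ is an idempotent, and $\mul S=\ran T\cap\St=\mul T=\mul T$. The two conditions of $ii)$ in Theorem \ref{solQrl} are thus met (with $R=T$, $S=P_{\ran T,\St}$): $\dom T=\{x:S(x)=T(x)\}+\ker T$ and $\mul T=\mul S$. Hence there is $Q\in\cQ$ with $T=SQ=P_{\ran T,\St}Q$. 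Since $P_{\ran T,\St}\in\Mp(\X)$ and every projection in $\cQ$ is a multivalued projection (its range is contained in its domain), $T$ is a product of two multivalued projections, i.e. $T\in\Mp^2$.

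\medskip

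I do not expect a serious obstacle here; the content is already packaged in Theorems \ref{Zorn} and \ref{solQrl}. The only points requiring a little care are: checking that the subspace $\St$ extracted in $i)\Rightarrow ii)$ really satisfies $\St\cap\ran T=\mul T$ (this uses $P\in\Mp$, so $\ker P\cap\ran P=\mul P=\mul T$), and observing in $ii)\Rightarrow i)$ that $P_{\ran T,\St}$ is a multivalued projection so that the projection $Q$ supplied by Theorem \ref{solQrl} may be viewed as an element of $\Mp(\X)$; both are immediate from the definitions and Proposition \ref{repidemp}.
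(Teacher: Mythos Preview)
Your proposal is correct and follows essentially the same approach as the paper: both deduce the equivalence by combining Theorem \ref{Zorn} (to write $T=P_{\ran T,\St}Q_0$ with $\St\cap\ran T=\mul T$) with Theorem \ref{solQrl} (applied with $R=T$ and $S=P_{\ran T,\St}$). The paper's proof is more compressed, but the logical structure is identical to yours.
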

\begin{proof}
By Theorem \ref{Zorn}, $T\in\Mp^2$ if and only if $T=P_{\ran T, \St}Q_0$ for some $Q_0\in\cQ$ and $\St$ such that $\ran T\cap\St=\mul T.$ But, by Theorem \ref{solQrl}, the last condition holds if and only if $\dom T=\{x: T(x)=P_{\ran T, \St}(x)\}+\ker T,$ for some subspace $\St$ such that $\St\cap\ran T=\mul T.$
\end{proof}

The next corollary shows that every relation in $\Mp^2$ admits a linear selection  that is a product of projections.

\begin{cor}\label{QQ} It holds that
	\begin{equation}
		\Mp^2=\{P_0Q_0\hat{\dotplus}(\{0\}\times \St):P_0,Q_0\in\cQ, \St\subseteq \ker P_0\}\nonumber
	\end{equation}
\end{cor}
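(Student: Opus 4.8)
The plan is to combine Theorem~\ref{Zorn} with the linear-selection description of multivalued projections in \eqref{selecMP}. Starting from an arbitrary $T\in\Mp^2$, Theorem~\ref{Zorn} gives $P\in\Mp$ and $Q_0\in\cQ$ with $T=PQ_0$, $\ran P=\ran T$, $\mul P=\mul T$, $\dom Q_0=\dom T$, $\ker Q_0=\ker T$. By \eqref{selecMP} applied to $P$, there is a projection $P_0\in\cQ$ and a subspace $\T\subseteq\ker P_0$ with $P=P_0\hat{\dotplus}(\{0\}\times\T)$, and in fact (by Proposition~\ref{seleccion} / the proof of \eqref{selecMP}) one can take $\T=\mul P=\mul T$ and $\ran P_0$ an algebraic complement of $\mul T$ in $\ran T$, so $\ran P_0\dotplus\mul T=\ran T$.

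The next step is to push the multivalued part through the product. Writing $T=PQ_0=(P_0\hat{\dotplus}(\{0\}\times\mul T))Q_0$, I would use Lemma~\ref{sumasubs} together with item~2 of the third lemma in the Preliminaries: since $\ran Q_0=\ran Q_0$ and $Q_0$ is an operator with $\ker Q_0=\ker T$, one has $(\{0\}\times\mul T)Q_0=\ker Q_0\times\mul T=\ker T\times\mul T$, and because $\ker T\subseteq\ker(P_0Q_0)$ (as $\ker T=\ker Q_0\subseteq\ker(P_0Q_0)$) the identity $A\hat{+}(\N\times\St)=A\hat{+}(\{0\}\times\St)$ with $\N\subseteq\ker A$ turns $T$ into $P_0Q_0\hat{\dotplus}(\{0\}\times\mul T)$. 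One should check the sum is direct, which follows since $P_0Q_0$ is an operator. This shows $T$ lies in the right-hand set with $\St=\mul T$, and $\St=\mul T=\mul P\subseteq\ker P_0$ as required.

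For the reverse inclusion, take $P_0,Q_0\in\cQ$ and $\St\subseteq\ker P_0$ and set $T:=P_0Q_0\hat{\dotplus}(\{0\}\times\St)$. By \eqref{selecMP}, $P:=P_0\hat{\dotplus}(\{0\}\times\St)\in\Mp(\X)$. Then I would show $T=PQ_0$: by Lemma~\ref{sumasubs}, $PQ_0=P_0Q_0\hat{+}(\{0\}\times\St)Q_0=P_0Q_0\hat{+}(\ker Q_0\times\St)$, and since $\ker Q_0\subseteq\ker(P_0Q_0)$ this again reduces via the Preliminaries lemma to $P_0Q_0\hat{+}(\{0\}\times\St)=T$; directness of the sum is automatic because $P_0Q_0$ is single-valued. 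Hence $T=PQ_0\in\Mp\cQ\subseteq\Mp^2$.

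The main obstacle I anticipate is bookkeeping the multivalued parts cleanly: making sure that in each direction the subspace $\St$ one produces genuinely satisfies $\St\subseteq\ker P_0$, and that the various applications of Lemma~\ref{sumasubs} are on the equality side (which needs a range/domain containment hypothesis) rather than mere inclusion. In particular one must verify $\ker Q_0\subseteq\ker(P_0Q_0)$ and that the "absorb $\N\times\St$ into $\{0\}\times\St$" lemma applies to the relation $P_0Q_0$ (i.e.\ $\ker Q_0\subseteq\ker(P_0Q_0)$), and finally that the selection $P_0$ coming from Theorem~\ref{Zorn} can be chosen with $\mul P=\St$ exactly, which is guaranteed by Proposition~\ref{seleccion}. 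None of these is deep, but the proof is essentially an exercise in chaining the earlier results, so I would present it as a short computation citing Theorem~\ref{Zorn}, \eqref{selecMP}, Lemma~\ref{sumasubs}, and the Preliminaries lemma.
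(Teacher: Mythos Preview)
Your argument is correct and matches the paper's proof almost verbatim: both directions combine Theorem~\ref{Zorn} with the decomposition \eqref{selecMP} and then push the multivalued part through the product via Lemma~\ref{sumasubs} and the identity $T\hat{+}(\N\times\St)=T\hat{+}(\{0\}\times\St)$ for $\N\subseteq\ker T$. Your reverse inclusion is in fact slightly cleaner than the paper's, which first invokes Lemma~\ref{propmp2} to normalize $Q_0$ so that $\ker Q_0=\ker(P_0Q_0)$; your computation shows this is unnecessary, since the automatic inclusion $\ker Q_0\subseteq\ker(P_0Q_0)$ already suffices.
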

\begin{proof}
	Let $T=PQ_0$ as in Theorem \ref{Zorn} and $P=P_0\hat{\dotplus}(\{0\}\times \mul T)$ a decomposition of $P$ as in Proposition \ref{op-decomp}, i.e., $P_0\in\cQ$ and  $\mul T\subseteq \ker P_0$. Thus, $T=(P_0\hat{\dotplus}(\{0\}\times \mul T))Q_0=P_0 Q_0\hat{\dotplus}(\{0\}\times \mul T)Q_0=P_0 Q_0\hat{\dotplus}(\ker Q_0\times \mul T)=P_0Q_0\hat{\dotplus}(\{0\}\times \mul T)$ as desired.
	
	For the opposite inclusion, let $T=P_0Q_0\hat{\dotplus}(\{0\}\times \mul T)$ with $P_0,Q_0\in\cQ$ and $\mul T\subseteq \ker P_0$. By Lemma \ref{propmp2}, we can assume that $\ker Q_0=\ker P_0Q_0.$ Therefore, $T=P_0Q_0\hat{\dotplus}(\{0\}\times \mul T)=P_0Q_0\hat{\dotplus}(\ker Q_0\times \mul T)=(P_0\hat{\dotplus}(\{0\}\times \mul T))Q_0.$ Now, by Proposition \ref{op-decomp}, $P_0\hat{\dotplus}(\{0\}\times \mul T)\in\Mp$  and so $T\in\Mp^2.$
\end{proof}

We conclude this section by studying $\Mp^2$ when $\X$ is finite dimensional.
To this end, we recall a classical result on products of idempotent matrices due to Ballantine \cite{B}.

\begin{thm}[\cite{B}] \label{B} Let $\mathcal{F}$ be an arbitrary field, $n$ and $k$ be arbitrary positive integers, and $S$ be an $n\times n$ matrix over $\mathcal{F}$. Then $S$ is a product of $k$ idempotent matrices over $\mathcal{F}$ if and only if $\dim \ran (I-S)\leq k \dim \ker S.$
\end{thm}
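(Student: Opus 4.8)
The statement is invariant under conjugation: both $\dim\ran(I-S)$ and $\dim\ker S$ are unchanged by $S\mapsto PSP^{-1}$, and $E\mapsto PEP^{-1}$ preserves idempotency, so throughout I may replace $S$ by any similar matrix. I would record at the outset the elementary but decisive inequality $\dim\ker S\le\dim\ran(I-S)$, valid because $\ker S\cap\ker(I-S)=\{0\}$ forces $\dim\ker S+\dim\ker(I-S)\le n$. In particular, if $S$ is invertible then $\dim\ker S=0$, the hypothesis forces $\ran(I-S)=\{0\}$, i.e. $S=I$, and there is nothing to prove; so the interesting case is $r:=\dim\ker S\ge 1$. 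For the necessity direction, suppose $S=E_1\cdots E_k$ with each $E_i$ idempotent. For an idempotent $E$ one has $\ran(I-E)=\ker E$, so $\dim\ran(I-E)=\dim\ker E$; from the telescoping identity $I-E_1\cdots E_k=\sum_{i=1}^k E_1\cdots E_{i-1}(I-E_i)$ and subadditivity of rank I get $\dim\ran(I-S)\le\sum_{i=1}^k\dim\ker E_i$. Since the rank of a product is at most the rank of each factor, $\dim\ker E_i=n-\dim\ran E_i\le n-\dim\ran S=\dim\ker S$ for every $i$, whence $\dim\ran(I-S)\le k\dim\ker S$.

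The sufficiency direction is the substantial one, and I would begin with the case $k=1$: there the hypothesis $\dim\ran(I-S)\le\dim\ker S$ combines with the reverse inequality above to give equality together with $\mathcal F^{\,n}=\ker S\oplus\ker(I-S)$, and then $S$ is exactly the projection onto $\ker(I-S)$ along $\ker S$, hence idempotent. Next I would establish a padding lemma: if $S$ is a product of $m$ idempotents and $r\ge 1$, then $S$ is a product of $m+1$ idempotents. Indeed, choosing a line $\ell\subseteq\ker S$ and the idempotent $F=I-P$ with $P$ a rank-one idempotent onto $\ell$, one has $SF=S$, so right-multiplying the given factorization by $F$ yields $m+1$ factors. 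Because $\dim\ran(I-S)\le k\dim\ker S$ is equivalent to $k\ge\lceil\dim\ran(I-S)/r\rceil$, padding reduces everything to realizing $S$ as a product of exactly $k_0:=\lceil\dim\ran(I-S)/r\rceil$ idempotents.

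The core is then an inductive \emph{peeling} step: given singular $S$ with $d:=\dim\ran(I-S)>r$, write $S=ES'$ with $E$ idempotent and $S'$ again singular and satisfying $\dim\ran(I-S')\le d-r$, so that one idempotent is stripped off while the defect drops by $r$; iterating $k_0-1$ times leaves a matrix with $\dim\ran(I-\,\cdot\,)\le r$, which is idempotent by the base case. The natural choice is to take $E$ the projection with range $\ran S$ and kernel $L$, where $L$ is an $r$-dimensional complement of $\ran S$ chosen \emph{inside} $\ran(I-S)$; such an $L$ exists because $\ran S+\ran(I-S)=\mathcal F^{\,n}$ forces $\dim(\ran(I-S)\cap\ran S)=d-r$. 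Every $S'=S+G$ with $G$ mapping into $L$ then satisfies $ES'=S$, and taking $G$ equal to the $L$-component of $I-S$ makes $\dim\ran(I-S')=\dim\ran\,E(I-S)=d-r$.

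The hard part is that $S'$ must simultaneously be kept singular, with a kernel large enough for the induction to close: one needs $\dim\ker S'\ge r$ so that $\dim\ran(I-S')\le(k-1)\dim\ker S'$. For the particular $G$ above one computes $\ker S'=\ran S\cap\ker S$, which may be too small, so the rank drop and the control of $\ker S'$ pull in opposite directions; the available freedom is exactly the non-uniqueness of $G$ (equivalently of $S'$), and the whole difficulty is to spend it on both constraints at once. Resolving this tension is the crux of the theorem, and it is what defeats any naive reduction along the invariant subspaces of $S$: the idempotent factors need not respect any eigenspace decomposition, as already the factorization of $\mathrm{diag}(2,0)$ into two rank-one idempotents, despite its eigenvalue $2$, makes clear. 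I would therefore either carry out Ballantine's careful selection of $G$ that preserves $\dim\ker S'=r$, or else first reduce $S$ to its rational (primary) canonical form, produce explicit $k_0$-fold idempotent factorizations of the elementary blocks — nilpotent blocks and unipotent/scalar blocks — and assemble them globally.
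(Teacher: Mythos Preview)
The paper does not prove this theorem; it is quoted verbatim from Ballantine \cite{B} and used as a black box in the proposition that follows. There is therefore no proof in the paper to compare against, and your proposal must be judged on its own.

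Your necessity argument is correct and complete. Your sufficiency argument, however, is only an outline. You correctly isolate the peeling step $S=ES'$ with $E$ the projection onto $\ran S$ along a subspace $L\subseteq\ran(I-S)$, you correctly compute that the particular choice $G=(I-E)(I-S)$ gives $\dim\ran(I-S')=d-r$ but $\ker S'=\ker S\cap\ran S$, and you correctly identify the tension between dropping the defect by $r$ and keeping $\dim\ker S'\ge r$. But you then explicitly defer the resolution (``I would therefore either carry out Ballantine's careful selection of $G$\ldots''). That selection \emph{is} the content of the theorem; without it you have a strategy, not a proof.

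Your fallback plan---pass to rational canonical form and factor block by block---has a concrete obstruction, and your own $\mathrm{diag}(2,0)$ example already exhibits it. If some primary block $B_j$ is invertible but $\neq I$ (for instance the $1\times 1$ block $[2]$), then $\dim\ker B_j=0$ while $\dim\ran(I-B_j)>0$, so no number of idempotents on that block's subspace can produce $B_j$; the idempotent factors must genuinely mix the blocks. Hence the block-by-block assembly cannot work as stated, and you are forced back to the global peeling argument whose crux you left open.
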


\begin{prop}
	Let $\X$ be finite dimensional and $T\in \lr(\mc X).$ If $T\in \Mp^2$ then $\dim \ran (T-I)\leq 2 \dim \ker T+\dim \mul T.$
\end{prop}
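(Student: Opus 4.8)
The plan is to reduce the statement to Ballantine's theorem by passing to a suitable linear selection of $T$. Suppose $T\in\Mp^2$. By Corollary \ref{QQ} (or Theorem \ref{Zorn} followed by Proposition \ref{op-decomp}), we may write $T=P_0Q_0\hat{\dotplus}(\{0\}\times\mul T)$ with $P_0,Q_0\in\cQ$ and $\mul T\subseteq\ker P_0$. In the finite-dimensional setting every subspace is closed and $\dom T=\X$ is automatic for the idempotent factors produced there, so the operator $S_0:=P_0Q_0$ is an everywhere-defined idempotent-times-idempotent matrix on $\X$; more precisely, since $P_0,Q_0$ are genuine projections (single-valued idempotents) on the finite-dimensional space $\X$, they are idempotent matrices, and hence $S_0=P_0Q_0$ is a product of $k=2$ idempotent matrices.

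Next I would apply Theorem \ref{B} with $k=2$ to $S_0$: it gives $\dim\ran(I-S_0)\le 2\dim\ker S_0$. It then remains to compare these quantities with the corresponding quantities for $T$. Here one uses that $T=S_0\hat{\dotplus}(\{0\}\times\mul T)$ together with $\mul T\subseteq\ker P_0\subseteq\ker S_0$. From this decomposition, $\ker T=\ker S_0$ (since the multivalued part contributes nothing new to the kernel: $(x,0)\in T$ iff $S_0x\in-\mul T\cap\ran S_0$, and one checks $\ran S_0\cap\mul T=\{0\}$ because $\mul T\subseteq\ker P_0$ and $\ran S_0\subseteq\ran P_0$, while $\ran P_0\cap\ker P_0=\{0\}$). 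Also, using Lemma \ref{propmp2} one may additionally assume $\ker Q_0=\ker S_0$, which streamlines this bookkeeping. For the range: $\ran(T-I)$ should be analyzed as $(T-I)=(S_0-I)\hat{+}(\{0\}\times\mul T)$, giving $\ran(T-I)=\ran(S_0-I)+\mul T$, hence $\dim\ran(T-I)\le\dim\ran(S_0-I)+\dim\mul T=\dim\ran(I-S_0)+\dim\mul T$. Combining with Ballantine's inequality yields $\dim\ran(T-I)\le 2\dim\ker S_0+\dim\mul T=2\dim\ker T+\dim\mul T$, which is the claim.

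The main obstacle I anticipate is the careful verification of the two identities $\ker T=\ker S_0$ and $\ran(T-I)=\ran(S_0-I)+\mul T$ directly from the decomposition $T=S_0\hat{\dotplus}(\{0\}\times\mul T)$; these require using the precise relationship $\mul T\subseteq\ker P_0$ and the fact that $\ran S_0\cap\mul T=\{0\}$. The subtlety is that $T-I$ is a genuine linear relation (it has nontrivial multivalued part $\mul T$), so "range" must be handled as the range of a subspace of $\X\times\X$, and one must check that adding $\{0\}\times\mul T$ to the graph of $S_0-I$ enlarges the range by exactly $\mul T$ and nothing in the overlap is double-counted — but since $\mul T$ sits in $\ker S_0$ and is complementary to $\ran S_0$, no cancellation occurs and the dimension count is additive. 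Once these identities are in hand, the proof is immediate from Theorem \ref{B}.
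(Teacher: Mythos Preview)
Your argument coincides with the paper's: both use Corollary~\ref{QQ} to write $T=P_0Q_0\hat{\dotplus}(\{0\}\times\mul T)$ with $P_0,Q_0\in\cQ$, apply Ballantine's theorem (Theorem~\ref{B}) with $k=2$ to the operator part $S_0=P_0Q_0$, and then combine $\ker S_0=\ker T$ with $\ran(T-I)=\ran(S_0-I)+\mul T$ to obtain the bound. The paper actually records the sharper identity $\ran(T-I)=\ran(S_0-I)\dotplus\mul T$ and asserts $\ker S_0=\ker T$ without further comment, whereas you spell out the verification of these facts in more detail; otherwise the two proofs are the same.
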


\begin{proof}
	Let $T\in\Mp^2.$ Then, by Corollary \ref{QQ}, $T=P_0 Q_0\hat{\dotplus}(\{0\}\times \mul T)$ with $P_0, Q_0\in\cQ.$ Hence, $T-I=(P_0 Q_0-I)\hat{\dotplus}(\{0\}\times \mul T)$and $\ran (T-I)=\ran (P_0Q_0-I)\dotplus \mul T$. Therefore, by applying Theorem \ref{B} and from the fact that $\ker P_0Q_0=\ker T$, we get that 
	$$\begin{aligned}
	\dim \ran (T-I)&=\dim \ran (P_0Q_0-I)+\dim \mul T\leq 2 \dim \ker P_0Q_0+\dim\mul T\\
	&=2 \dim \ker T+\dim\mul T. 
	\end{aligned}$$
\end{proof}

\subsection*{Acknowledgments}
M.~Contino is
supported by CONICET PIP 11220200102127CO.

\end{document}